\numberwithin{equation}{section}
\newtheorem{theorem}{Theorem}[section]
\newtheorem{proposition}[theorem]{Proposition}
\newtheorem{conjecture}[theorem]{Conjecture}
\newtheorem{lemma}[theorem]{Lemma}
\newtheorem{problem}[theorem]{Problem}
\theoremstyle{definition}
\newtheorem{definition}[theorem]{Definition}
\newtheorem{remark}[theorem]{Remark}
\newcommand{\skel}{\mathrm{Skel}}
\newcommand\lk{\mathrm{lk}}
\newcommand\st{\mathrm{st}}
\newcommand{\sd}{\mathrm{sd}}
\newcommand{\field}{{\bf k}}
\title{Face enumeration on flag complexes and flag spheres}
\author{Hailun Zheng\\
	\small Department of Mathematics \\[-0.8ex]
	\small University of Michigan\\[-0.8ex]
	\small Ann Arbor, MI 48109-1043, USA\\[-0.8ex]
	\small \texttt{hailunz@umich.edu}
}
\begin{document}
\maketitle
\begin{abstract}
	We give a survey on the recent results and problems on the face enumeration of flag complexes and flag simplicial spheres, with an emphasis on the characterization of face vectors of flag complexes, several lower-bound type of conjectures including the Charney-Davis conjecture and Gal's conjecture, and the upper bound conjecture for flag spheres and pseudomanifolds.
\end{abstract}
\section{Introduction}
In this paper we will survey some fascinating properties of flag complexes and results on the face enumeration of flag simplicial spheres. A simplicial complex is called \emph{flag} if all of the minimal non-faces have cardinality two, or equivalently, it is the clique complex of its graph.

We begin with a summary of major face enumeration results for general simplicial complexes and simplicial spheres. The Kruskal-Katona theorem \cite{Katona,Kruskal} fully characterizes the $f$-vector of simplicial complexes. In 1971, McMullen \cite{McMullen-UBT} conjectured a list of conditions to characterize the $f$-vector of simplicial polytopes. Around 1980, the work of Billera and Lee \cite{Billera-Lee-80,Billera-Lee-81} showed by construction the sufficiency of McMullen's conditions, and Stanley \cite{Stanley-g-theorem} proved the necessity, thus establishing the classical $g$-theorem. It is conjectured that the same characterization even holds for the $f$-vector of simplicial spheres. Other major results on face enumeration include the Upper Bound Theorem \cite{McMullen-UBT} (UBT, for short) and the Lower Bound Theorem \cite{Barnette-LBT} (LBT, for short), which state that among all simplicial $d$-polytopes with $n$ vertices, cyclic polytopes simultaneously maximize all the face numbers while stacked polytopes simultaneously minimize all the face numbers. Both of the theorems extend to the class of simplicial spheres \cite{Barnette-LBT,Stanley-UBT}, and have various generalizations in even larger classes of simplicial complexes, see, for example, \cite{Fogelsanger, Kalai-rigidity, Novik-UBT}.

However, none of the above results give tight bounds on the face numbers of flag complexes or spheres. For example, the clique complex of the graph of a stacked $d$-polytope and that of a cyclic $d$-polytope with $n$ vertices are $d$-dimensional and $(n-1)$-dimensional respectively, and hence they cannot be flag $(d-1)$-spheres. It is natural to ask if there are flag analogs of the Kruskal-Katona theorem, the $g$-theorem, the UBT and the LBT, etc. 

We know very little about the answers to these problems in general. In fact, we don't even have a plausible conjecture for the flag Kruskal-Katona theorem. The difficulty comes from the fact that the $f$-vector of a flag complex cannnot be changed ``continuously": adding one edge to the graph of a given flag complex may result in a huge change in the $f$-vector of the corresponding clique complex. 
As to the flag lower bounds on the face numbers, various conjectures have been proposed in the last decade. Here we just mention one remarkable contribution of Gal: in \cite{Gal-real root conjecture} he defined the $\gamma$-vector which can be considered as the flag analog of the $g$-vector for general simplicial spheres, and he further conjectured that the $\gamma$-numbers are nonnegative for flag simplicial spheres. This conjecture has been verified in several important classes of flag simplicial spheres, including all 3-dimensional flag spheres \cite{DavisOkun-dimension 3 CD conjecture} and the barycentric subdivision of all simplicial spheres (even of regular CW spheres) \cite{Karu-cd-index}.

A upper bound conjecture (UBC, for short) for flag spheres was first proposed for odd-dimensional ones \cite{Nevo-Lutz, Nevo-Petersen}. It states that among all flag $(2k-1)$-spheres with $n$ vertices (or more generally flag manifolds), the join of cycles of length as equal as possible is the unique maximizer of all the face numbers. This conjecture has been verified asymptotically \cite{Adamaszek-Hladky} and for the edge number \cite{Zheng-flag edge number}. The even-dimensional flag UBC \cite{Zheng-flag 3-dimensional} turns out to be more complicated. As we can see from the dimension 4 case (where Gal's result gives the upper bounds of the face numbers), the maxmizer is at least not unique. So far this conjecture is wide open.

The paper is structured as follows. In Section 2 we give basic definitions, list several important properties of flag complexes, and provide the preliminary results on the lower bounds on $f$-, $h$- and $g$-vectors. in Section 3 we discuss how the $f$-vectors of flag complexes behave. Section 4 and Section 5 are devoted to the flag LBT and UBT respectively: in Section 4 we will see how the geometric meaning of flagness and introducing the $cd$-index help proving several important cases of the flag lower bound conjecture. We close with surveying current results on the flag UBC in Section 5.
 
\section{Preliminaries}
\subsection{Definitions and Properties of flag complexes}
A \emph{simplicial complex} $\Delta$ on a vertex set $V=V(\Delta)$ is a collection of subsets
$\sigma\subseteq V$, called faces, that is closed under inclusion. For $\sigma\in \Delta$, let $\dim\sigma:=|\sigma|-1$ and define the \emph{dimension} of $\Delta$, $\dim \Delta$, as the maximal dimension of its faces. A \emph{facet} is a maximal face under inclusion. We say $\Delta$ is \emph{pure} if all of its facets have the same dimension. If $\Delta$ is a simplicial complex and $\sigma$ is a face of $\Delta$, the \emph{link} of $\sigma$ in $\Delta$ is $\lk(\sigma,\Delta):=\{\tau-\sigma\in \Delta: \sigma\subseteq \tau\in \Delta\}$. When the context is clear, we will abbreviate the notation and write it as $\lk(\sigma)$. If $W$ is a subset of $V(\Delta)$, the \emph{induced} subcomplex of $\Delta$ on $W$ is the subcomplex $\Delta[W]=\{\tau\in\Delta: \tau\subseteq W\}$.

A simplicial complex $\Delta$ is a \emph{simplicial sphere} (\emph{simplicial manifold}, resp.) if the geometric realization of $\Delta$, denoted as $||\Delta||$, is homeomorphic to a sphere (manifold, resp). A simplicial sphere is \emph{polytopal} if it can be realized as the boundary complex of a polytope. Let $\field$ be a field and let $\tilde{H}_∗ (\Gamma, \field)$ denote the reduced singular homology of $||\Gamma||$ with coefficients in $\field$. For a pure $(d-1)$-dimensional simplicial complex $\Delta$ and a field $\field$, we say that $\Delta$ is a $\field$-\emph{homology manifold} if $\tilde{H}_∗ (\lk(\sigma), \field)\cong\tilde{H}_∗ (\mathbb{S}^{d-1-|\sigma|}, \field)$ for every nonempty face $\sigma\in\Delta$. A $\field$-\emph{homology sphere} is a $\field$-homology manifold that has the $\field$-homology of a sphere. Every simplicial manifold (resp. simplicial sphere) is a homology manifold (resp. homology sphere). A $(d-1)$-dimensional simplicial complex $\Delta$ is called a $(d-1)$-\emph{pseudomanifold} if it is pure and every $(d-2)$-face (called ridge) of $\Delta$ is contained in exactly two facets. A $(d-1)$- pseudomanifold $\Delta$ is called a \emph{normal $(d-1)$-pseudomanifold} if it is connected, and the link of each face of dimension $\leq d- 3$ is also connected. For a fixed $d$, we have the following hierarchy:

simplicial $(d-1)$-spheres $\subseteq$ homology $(d-1)$-spheres $\subseteq$ homology $(d- 1)$-manifolds

{\centering $\subseteq$ normal $(d- 1)$-pseudomanifolds $\subseteq$ $(d-1)$-pseudomanifolds.}

When $d = 3$, the first two classes complexes above coincide; so do the third and fourth classes. However, starting from $d = 4$, all the inclusions above are strict.

A simplicial complex is called \emph{flag} if all of its minimal non-faces have cardinality two, or equivalently, it is the clique complex of its graph. Flag complexes have many properties which general simplicial complexes do not usually have, as we shall see in the following lemma \cite[Lemma 5.2]{Nevo-Petersen}.
\begin{lemma}\label{lm: face link prop}
		Let $\Delta$ be a flag complex on vertex set $V$.
		\begin{itemize}
			\item If $W\subseteq V$, then $\Delta[W]$ is also flag. 
			\item If $\sigma$ is a face in $\Delta$, then $\lk(\sigma)=\Delta[V(\lk(\sigma))]$. In particular, all face links in a flag complex are also flag.
			\item Any edge $\{v,v'\}$ in $\Delta$ satisfies the link condition $\lk(v)\cap \lk(v')=\lk(\{v,v'\})$. More generally, any face $\sigma=\sigma_1\cup \sigma_2$ in $\Delta$ satisfies $\lk(\sigma)=\lk(\sigma_1)\cap \lk(\sigma_2)$.
		\end{itemize}
\end{lemma}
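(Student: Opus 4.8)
The plan is to reduce all three statements to the clique-complex characterization of flagness: a set $\tau\subseteq V$ is a face of a flag complex $\Delta$ if and only if every $2$-element subset of $\tau$ is an edge of $\Delta$, so the recurring move is to verify membership in $\Delta$ by checking all pairs. I would prove the bullets in the stated order, since the ``in particular'' clause of the second uses the first. For the first bullet, argue via minimal non-faces: because $W\subseteq V$, every singleton of $W$ is a face of $\Delta[W]$, so $\Delta[W]$ has no minimal non-face of cardinality $1$; and if $\tau\subseteq W$ is not a face of $\Delta[W]$, then $\tau\notin\Delta$, so $\tau$ contains a minimal non-face $\rho$ of $\Delta$, which has $|\rho|=2$ by flagness, and $\rho\subseteq W$ is then a non-face of $\Delta[W]$ of cardinality $2$ contained in $\tau$. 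Hence every minimal non-face of $\Delta[W]$ has cardinality $2$, i.e.\ $\Delta[W]$ is flag. (Equivalently, $\Delta[W]$ is the clique complex of the induced subgraph of the graph of $\Delta$ on $W$.)

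For the second bullet, the inclusion $\lk(\sigma)\subseteq\Delta[V(\lk(\sigma))]$ is automatic for any simplicial complex, since a link is a subcomplex on its own vertex set. For the reverse inclusion, let $\tau\in\Delta$ with $\tau\subseteq V(\lk(\sigma))$; then by the definition of the link, for each $v\in\tau$ we have $\{v\}\cup\sigma\in\Delta$ and $v\notin\sigma$, while $\sigma,\tau\in\Delta$. Now every pair contained in $\tau\cup\sigma$ is an edge: pairs inside $\tau$ because $\tau\in\Delta$, pairs inside $\sigma$ because $\sigma\in\Delta$, and a mixed pair $\{v,w\}$ with $v\in\tau$, $w\in\sigma$ because $\{v,w\}\subseteq\{v\}\cup\sigma\in\Delta$. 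By flagness $\tau\cup\sigma\in\Delta$, and $\tau\cap\sigma=\emptyset$, so $\tau\in\lk(\sigma)$. The ``in particular'' claim is then immediate: $\lk(\sigma)=\Delta[V(\lk(\sigma))]$ is an induced subcomplex of the flag complex $\Delta$, hence flag by the first bullet.

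For the third bullet, the inclusion $\lk(\sigma_1\cup\sigma_2)\subseteq\lk(\sigma_1)\cap\lk(\sigma_2)$ holds in any complex: if $\tau$ is disjoint from $\sigma_1\cup\sigma_2$ with $\tau\cup\sigma_1\cup\sigma_2\in\Delta$, then $\tau\cup\sigma_i\in\Delta$ and $\tau\cap\sigma_i=\emptyset$ for $i=1,2$. For the reverse inclusion, take $\tau\in\lk(\sigma_1)\cap\lk(\sigma_2)$, so $\tau\cup\sigma_1,\tau\cup\sigma_2\in\Delta$ and $\tau$ is disjoint from $\sigma_1\cup\sigma_2$; using also the hypothesis that $\sigma:=\sigma_1\cup\sigma_2\in\Delta$, the same case analysis as above (pairs inside $\tau$, pairs inside $\sigma$, and mixed pairs $\{a,b\}$ with $a\in\tau$ and $b\in\sigma_j$ lying in $\tau\cup\sigma_j\in\Delta$) shows every pair in $\tau\cup\sigma$ is an edge, so $\tau\cup\sigma\in\Delta$ by flagness and $\tau\in\lk(\sigma)$; the edge statement is the case $\sigma_1=\{v\}$, $\sigma_2=\{v'\}$. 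Since each step is an elementary pairwise check, I do not expect a real obstacle here; the only points requiring care are bookkeeping the disjointness conditions and, in the last bullet, using that $\sigma_1\cup\sigma_2$ is itself a face (not merely that $\sigma_1$ and $\sigma_2$ are, though this is part of the hypothesis) in order to conclude that the pairs inside $\sigma$ are edges.
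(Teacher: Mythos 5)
Your proof is correct, and the argument is the standard one: reduce each claim to the pairwise characterization of flagness (a subset of $V$ is a face if and only if all its $2$-element subsets are edges). The paper itself does not prove this lemma but simply cites it (Lemma 5.2 of Nevo--Petersen), so there is nothing internal to compare against; your bookkeeping of the disjointness conditions and of the hypothesis that $\sigma_1\cup\sigma_2$ is itself a face in the third bullet is exactly the care that is needed, and the deduction of the ``in particular'' clause from the first bullet is also the right ordering.
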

The next property is about the connectivity of the graph of a complex. The graph of a simplicial complex $\Delta$ is said to be
\emph{$n$-connected} if $\Delta$ has at least $n+1$ vertices and the complex obtained from $\Delta$ by deleting
any $n-1$ or fewer vertices and their incident faces is connected. It is known that every polytopal $(d-1)$-sphere, or more generally, every $(d-1)$-pseudomanifold is $d$-connected, see \cite[theoerem 3.14]{Ziegler-book} and \cite[Corollary 5]{Barnette-pseudomanifold connectedness}. For flag complexes, a stronger statement holds \cite[Theorem 1.2]{Athanasiadis-flag prop}.
\begin{lemma}\label{lm: graph conn prop}
	For every flag simplicial pseudomanifold $\Delta$ of dimension $d-1$, the graph
	$G(\Delta)$ is $(2d-2)$-connected.
\end{lemma}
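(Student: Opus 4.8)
The plan is to induct on $d$. For $d=2$ a (connected) flag $1$-pseudomanifold is just a cycle of length at least $4$, since flagness forbids triangles, and cycles are $2$-connected; one also wants at least $2d-1$ vertices, which follows from the elementary fact that a flag $(d-1)$-pseudomanifold has at least $2d$ vertices (an easy induction: a vertex link is a flag $(d-2)$-pseudomanifold, so $\deg v\ge 2d-2$, and a pseudomanifold is never a cone, yielding one further vertex; cf.\ Section~2). For the inductive step I use Lemma~\ref{lm: face link prop}: in a flag complex every face link is flag, and in a pseudomanifold every face link is again a pseudomanifold, so the link $\lk(v)$ of a vertex is a flag $(d-2)$-pseudomanifold, hence $G(\lk(v))$ is $(2d-4)$-connected by the inductive hypothesis.

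Suppose for contradiction that $S\subseteq V(\Delta)$ with $|S|\le 2d-3$ is a minimal vertex cut of $G(\Delta)$, and pick $a,b$ in distinct components $A,B$ of $G(\Delta)\setminus S$. Since every facet of $\Delta$ is a clique, no facet meets both $A$ and $B$. Using that $\Delta$ is strongly connected (a hypothesis that has to be part of the statement for it to be true --- normal pseudomanifolds are strongly connected), a ridge path in the dual graph from a facet meeting $A$ to one meeting $B$ must contain two adjacent facets of the form $R\cup\{a'\}$ and $R\cup\{b'\}$; since $R\subseteq (A\cup S)\cap(B\cup S)$ we get $R\subseteq S$, while $a'\in A$ and $b'\in B$. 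After renaming $a'=a$, $b'=b$, the link $\lk(R,\Delta)=\{\{a\},\{b\}\}$ has exactly two vertices. Put $S':=S\setminus R$; then $|R|=d-1$ and $|S'|\le d-2$.

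The crux is to choose $r\in R$ and pass to $\lk(r,\Delta)$ --- a flag $(d-2)$-pseudomanifold, hence $(2d-4)$-connected --- whose vertex set contains $R\setminus r$ and $\{a,b\}$, and to note that $V(\lk(r))\cap S=(R\setminus r)\cup(S'\cap V(\lk(r)))$ has at most $(d-2)+(d-2)=2d-4$ elements. The key observation is that for a suitable $r$ this drops to at most $2d-5$: if every vertex of $S'$ were adjacent to every vertex of $R$, then for $s\in S'$ the clique $R\cup\{s\}$ would be a face, hence a facet extending the ridge $R$, forcing $s\in\{a,b\}$ --- impossible since $a,b\notin S$. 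For such an $r$, deleting $S$ from $G(\lk(r))$ leaves a connected graph still containing $a$ and $b$, hence an $a$--$b$ path; all its vertices are neighbours of $r$ and avoid $S$, so it is an $a$--$b$ path in $G(\Delta)\setminus S$, contradicting the choice of $S$.

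The main obstacle is the \emph{sharpness} of the constant: a naive link induction, as in Barnette-type proofs of $d$-connectivity, would only give $(2d-3)$-connectivity, and the missing unit is recovered precisely in the step above, where flagness makes $R\cup\{s\}$ a face and the pseudomanifold condition forces $\lk(R)$ to consist of exactly two vertices. A secondary, more routine difficulty is the extraction of the ridge $R\subseteq S$ when some facet happens to be contained in $S$; there one uses minimality of the cut and connectedness of vertex links to see that the dual-graph path may still be taken to step directly from a facet meeting $A$ to a facet meeting $B$.
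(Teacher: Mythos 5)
Your overall strategy --- induction on $d$ through vertex links, using flagness to show that the putative small cut $S$ cannot be a clique --- is the right one; the paper just cites Athanasiadis \cite{Athanasiadis-flag prop} for this lemma, and his argument is of exactly this type. Your ``key observation'' (a clique inside $\Delta$ is a face, and a $d$-element face is a facet whose boundary ridge has only two cofacets) is indeed the heart of the matter, and you are also right that some connectivity hypothesis must be added; in fact since the induction needs the vertex links to be pseudomanifolds of the same kind, \emph{normal} pseudomanifold is the natural assumption. The genuine gap is the assertion that a ridge path in the dual graph from an $A$-facet to a $B$-facet ``must contain two adjacent facets of the form $R\cup\{a'\}$ and $R\cup\{b'\}$.'' Once $d\ge 3$ a facet has $d\le 2d-3$ vertices and so may lie entirely inside $S$; the path can then pass from $A$-facets to $B$-facets only through a block of such $S$-facets, and at each transition ridge one cofacet lies in $S$, not on the opposite side, so the desired ridge $R$ with $\lk(R)=\{a,b\}$ need not appear. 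You flag this case yourself but only gesture at ``minimality of the cut and connectedness of vertex links''; I do not see how those alone close it.

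Fortunately the ridge $R$ is an unnecessary detour, and dropping it closes the gap while shortening the proof. Take any $v\in S$; by minimality of the cut, $v$ has neighbours in at least two components of $G(\Delta)\setminus S$. By Lemma~\ref{lm: face link prop} and induction, $G(\lk(v))$ is $(2d-4)$-connected on $\ge 2d-2$ vertices, and $|V(\lk(v))\cap S|\le|S|-1\le 2d-4$ since $v\notin V(\lk(v))$. If this count is $\le 2d-5$, deleting $V(\lk(v))\cap S$ from $G(\lk(v))$ leaves a connected graph meeting two components of $G(\Delta)\setminus S$; as $\lk(v)$ is an induced subcomplex (flagness again), this is a forbidden path in $G(\Delta)\setminus S$. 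If instead equality holds for every $v\in S$, then every vertex of $S$ is adjacent to every other, so $S$ is a clique and hence, by flagness, a face; thus $|S|\le d$, giving $|V(\lk(v))\cap S|\le d-1\le 2d-5$ whenever $d\ge 4$. For $d=3$, $S$ is then a facet and $\lk(v)$ is an induced cycle of length $\ge 4$ in which the two vertices of $S\setminus v$ are consecutive (they span the edge $S\setminus v$ of $\lk(v)$), so deleting them leaves a path, which again connects two components and yields a contradiction. This reorganisation uses exactly your two ingredients --- the inductive $(2d-4)$-connectivity of links and the flag-implies-face observation --- but avoids the dual-graph path entirely and, as a bonus, dispenses with any appeal to Barnette's $d$-connectivity of pseudomanifolds.
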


We introduce several ways to construct new flag complexes from given ones. If $\Delta$ and $\Gamma$ are two simplicial complexes on disjoint vertex sets, then the \emph{join} of $\Delta$ and $\Gamma$, denoted as $\Delta *\Gamma$, is the simplicial complex on vertex set $V(\Delta)\cup V(\Gamma)$ whose faces are $\{\sigma\cup\tau:\sigma\in\Delta, \tau\in\Gamma\}$. Hence by the definition, if $\Delta$ and $\Gamma$ are flag complexes of dimension $d_1-1$ and $d_2-1$ respectively, then $\Delta*\Gamma$ is a flag complex of dimension $d_1+d_2-1$. 

To simplify the notation, in the following we write the one-vertex set $\{v\}$ as $v$. For a simplicial complex $\Delta$ and a face $\sigma$ in it, let the \emph{stellar subdivision} of $\Delta$ at $\sigma$ be 
\[\sd(\sigma,\Delta)=\{\tau\in\Delta: \tau\cap \sigma=\emptyset\}\cup (v*\partial \sigma *\lk(\sigma,\Delta)),\]
where $v$ is a new vertex. The stellar subdivision of $\Delta$ is always PL homeomorphic to $\Delta$. However, since the stellar subdivision at face $\sigma$ creates a missing face $\sigma$, $\sd(\sigma,\Delta)$ is flag only when $\sigma$ is an edge. The inverse of edge subdivision is called the edge contraction:
\[\sd^{-1}(\{u,v\}, \Delta)=\{\tau\in\Delta:u\in\tau\}\cup\{(\tau\cup v)\backslash u: u\in F\in\Delta\}.\]
It is well-known that the edge contraction of a flag complex $\Delta$ is flag if and only if the contracted edge does not belong to any induced 4-cycles in $\Delta$. The following lemma \cite[Theorem 1.2]{Nevo-Lutz} is the flag analog of the classical Alexander theorem in PL topology.
\begin{lemma}
	Two flag simplicial complexes are PL homeomorphic if and only if they can be connected by a sequence of edge subdivisions and their inverses
	such that all the complexes in the sequence are flag.
\end{lemma}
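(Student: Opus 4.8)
The plan is to prove the two implications separately: ``flag-subdivision-equivalent $\Rightarrow$ PL homeomorphic'' is routine, and the converse rests on the classical Alexander theorem in PL topology together with a procedure for ``flagifying'' the resulting chain of stellar moves. For the easy direction, suppose $\Delta_1$ and $\Delta_2$ are joined by a sequence of complexes, each obtained from the previous by an edge subdivision or its inverse and each flag; since the stellar subdivision at \emph{any} face, and its inverse, is a PL homeomorphism, composing these homeomorphisms shows $||\Delta_1||$ and $||\Delta_2||$ are PL homeomorphic, and flagness is not even used. I would also record the companion fact --- implicit in the discussion above --- that an edge subdivision of a flag complex is \emph{automatically} flag: if $w$ is the vertex introduced when subdividing $\{u,v\}$ and $S$ is a set all of whose $2$-subsets are edges of the subdivision, a short case check (on whether $w\in S$ and on $|S\cap\{u,v\}|$) reduces, via flagness of $\Delta$, to the statement that $S\cup\{u,v\}\in\Delta$ once all its $2$-subsets lie in $\Delta$. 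Thus among flag complexes every forward edge subdivision is ``free,'' and only the inverse (contraction) moves carry a genuine constraint --- the contracted edge must lie in no induced $4$-cycle, as noted in the text.

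For the converse, assume $||\Delta_1||$ and $||\Delta_2||$ are PL homeomorphic. By Alexander's theorem there is a chain $\Delta_1=\Gamma_0,\Gamma_1,\dots,\Gamma_m=\Delta_2$ in which every step is a stellar subdivision at a face of arbitrary dimension or the inverse of one; the intermediate $\Gamma_i$ need not be flag, and the subdivided faces need not be edges. Equivalently one may begin from a common subdivision: there are simplicially isomorphic $\Omega_1$ refining $\Delta_1$ and $\Omega_2$ refining $\Delta_2$, and since $\sd(\Omega_j)$ is flag, refines $\Delta_j$, and $\sd(\Omega_1)\cong\sd(\Omega_2)$, it suffices to connect each $\Delta_j$ to $\sd(\Omega_j)$ by flag edge moves. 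Either way one is reduced to showing: (i) for every flag complex $\Delta$ one can pass from $\Delta$ to its barycentric subdivision $\sd(\Delta)$ by a sequence of edge subdivisions with all intermediate complexes flag (automatic, by the remark, once the moves are all forward); and (ii) if $\Omega$ is a flag complex refining a flag complex $\Delta$ and arising as such a barycentric subdivision of a common refinement, then $\Delta$ and $\Omega$ are joined by edge subdivisions and inverses through flag complexes. For (i) I would induct on $\dim\Delta$, writing $\sd(\Delta)$ as the result of stellar-subdividing the faces of $\Delta$ in decreasing order of dimension and realizing the stellar subdivision at each face $\tau$ of the current (flag, by induction and the remark) complex as an explicit finite sequence of edge subdivisions supported in $\st(\tau)$ --- for an edge this is a single move, and for larger $\tau$ one subdivides the boundary edges of $\tau$ and then the new ``interior'' edges in a prescribed order, checking flagness at each stage via the link description in Lemma~\ref{lm: face link prop}. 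For (ii) one localizes: $\Omega$ differs from $\Delta$ only over a union of closed stars, and within each such piece --- already barycentrically subdivided, hence flag --- one compares the two derived subdivisions directly and moves between them by edge subdivisions and contractions of already-subdivided cells.

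The main obstacle is not the topology, which Alexander's theorem handles, but the \emph{bookkeeping of flagness}. Every edge subdivision introduced is safe, but each time the construction needs an inverse move one must verify that the contracted edge lies in no induced $4$-cycle; arranging the order of moves --- or inserting a few extra subdivisions --- so that this always holds is the delicate point. A secondary difficulty is making the realization of a higher stellar subdivision by edge subdivisions in (i) genuinely local and mutually compatible across the various faces being processed, so that the induction in (i) and the localization in (ii) actually close up.
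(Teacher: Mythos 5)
The paper does not actually prove this lemma; it is a survey statement quoting \cite[Theorem~1.2]{Nevo-Lutz}, so there is no internal proof to compare against. Your easy direction and your observation that an edge subdivision of a flag complex stays flag are both correct, and you have correctly identified the $4$-cycle obstruction to edge contractions as the crux of the converse.

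There is, however, a concrete gap in step~(i). You propose to pass from a flag $\Delta$ to $\sd(\Delta)$ using only \emph{forward} edge subdivisions, by ``realizing the stellar subdivision at each face $\tau$ of the current complex as an explicit finite sequence of edge subdivisions supported in $\st(\tau)$.'' This cannot work as stated when $\dim\tau\geq 2$: the stellar subdivision $\sd(\tau,\Gamma)$ adds exactly one new vertex, and the only sequence of forward edge subdivisions that adds exactly one vertex is a single edge subdivision. But a single edge subdivision at an edge $e$ replaces each facet of $\st(e)$ by two facets, whereas $\sd(\tau,\Gamma)$ for a $k$-face $\tau$ replaces each facet of $\st(\tau)$ by $k+1$ facets; so already for a triangle $\tau$ the two operations disagree combinatorially, and no forward-only sequence can equal a higher stellar move. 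Thus contractions are unavoidable already in step~(i), and the delicacy you were hoping to postpone to step~(ii) --- choosing contracted edges that avoid induced $4$-cycles --- cannot be postponed. Step~(ii) is also left at the level of intent (``compare the two derived subdivisions directly and move between them''), while in the Lutz--Nevo proof this comparison is the entire content: they do not emulate individual stellar moves by forward edge subdivisions, but instead show that barycentric subdivision intertwines stellar moves up to flag edge subdivisions \emph{and} contractions, with the contractions carefully arranged to avoid induced $4$-cycles. Your outline has the right landmarks --- Alexander's theorem, flagness of barycentric subdivisions, the $4$-cycle obstruction --- but the reduction of (i) to forward-only moves is false, and without that the rest of the argument does not close.
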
 
\subsection{The lower bounds on $f$-, $h$- and $g$-vectors}
For a $(d-1)$-dimensional complex $\Delta$, we let $f_i = f_i(\Delta)$ be the number of $i$-dimensional faces of $\Delta$ for $-1\leq i\leq d-1$. The vector $(f_{-1}, f_0, \ldots, f_{d-1})$ is called the $f$\emph{-vector} of $\Delta$. The $h$-vector of a $(d-1)$-dimensional simplicial complex $\Delta$, $h(\Delta)=(h_0(\Delta), h_1(\Delta),\dots, h_{d-1}(\Delta))$, is defined by the equality
\[\sum_{i=0}^d h_i(\Delta)t^i=\sum_{i=0}^d f_{i-1}(\Delta)t^i(1-t)^{d-i}.\] 
The polynomial which appears in the left-hand side of the above identity is the $h$-polynomial of $\Delta$ and is denoted by $h_\Delta(x)$. Define the $g$-vector $g(\Delta)=(g_0(\Delta), g_1(\Delta), \dots, g_{\left\lfloor d/2\right\rfloor}(\Delta))$, where $g_0(\Delta)=1$ and $g_i=h_i(\Delta)-h_{i-1}(\Delta)$ for $i\neq 0$. It is not hard to see that among all flag simplicial $(d-1)$-spheres, the octahedral $(d-1)$-sphere simultaneously minimizes all $f$-numbers. This even holds in the class of flag pseudomanifolds \cite[Proposition 2.2]{Athanasiadis-flag prop}.
\begin{proposition}\label{lm: lower bound f-numbers}
	Let $\Delta$ be a $(d-1)$-dimensional flag pseudomanifold. Then $f_{i-1}(\Delta)\geq 2^i\binom{d}{i}$ for all $0\leq i\leq d$. Equality holds if and only if $\Delta$ is the octahedral $(d-1)$-sphere.
\end{proposition}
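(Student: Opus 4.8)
\medskip

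The plan is to prove the inequality and the equality characterization together by induction on $d$. The two ingredients are Lemma~\ref{lm: face link prop}, which guarantees that vertex links of a flag pseudomanifold are again flag pseudomanifolds, and the double-counting identity $i\,f_{i-1}(\Delta)=\sum_{v\in V}f_{i-2}(\lk(v))$, valid for $1\le i\le d$ and obtained by counting pairs $(v,\sigma)$ with $v\in\sigma$ and $\dim\sigma=i-1$. The case $i=0$ is trivial ($f_{-1}=1$), so assume $i\ge 1$ throughout.

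For $d=1$ a $0$-pseudomanifold is just two vertices, i.e. the octahedral $0$-sphere, and equality holds. For $d\ge 2$, fix a vertex $v$. Since $\Delta$ is pure and each of its ridges lies in exactly two facets, the same holds for $\lk(v)$, so $\lk(v)$ is a $(d-2)$-pseudomanifold; it is flag by Lemma~\ref{lm: face link prop}, and its vertex set is the neighborhood $N(v)$ of $v$ in $G(\Delta)$, so $f_0(\lk(v))=\deg(v)$. By induction $\deg(v)\ge 2(d-1)$ for every $v$. I next claim $f_0(\Delta)\ge 2d$; this is where flagness is essential. If some vertex $v$ were adjacent to all others, flagness would force $\Delta$ to be the cone $v\ast\lk(v)$ (every $\sigma\cup\{v\}$ would have all its edges present, hence be a face); but a cone over a nonempty pure $(d-2)$-complex is never a $(d-1)$-pseudomanifold, since a facet of its base is then a ridge contained in only one facet. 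So every $v$ has a non-neighbor, and counting $v$, its $\ge 2(d-1)$ neighbors and one non-neighbor gives $f_0(\Delta)\ge 2d$.

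Feeding the inductive bound $f_{i-2}(\lk(v))\ge 2^{i-1}\binom{d-1}{i-1}$ and then $f_0(\Delta)\ge 2d$ into the identity yields $i\,f_{i-1}(\Delta)\ge 2d\cdot 2^{i-1}\binom{d-1}{i-1}=2^{i}\,i\binom{d}{i}$, hence $f_{i-1}(\Delta)\ge 2^{i}\binom{d}{i}$. If equality holds for some $i$ with $1\le i\le d$, then, since $\binom{d-1}{i-1}\neq 0$, all inequalities above are tight; in particular $f_0(\Delta)=2d$. Combined with $\deg(v)\ge 2(d-1)$ and the existence of a non-neighbor, this forces $\deg(v)=2(d-1)$ and exactly one non-neighbor $\bar v$ for each $v$. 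Then $v\mapsto\bar v$ is a fixed-point-free involution of $V$, the non-edges of $G(\Delta)$ form a perfect matching, every other pair is an edge, and $G(\Delta)$ is $K_{2d}$ minus a perfect matching. Being flag, $\Delta$ is the clique complex of this graph, i.e. the join of $d$ copies of $S^{0}$ --- the octahedral $(d-1)$-sphere. The converse is immediate.

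The only step that is not pure bookkeeping is the bound $f_0(\Delta)\ge 2d$, and within it the flag-specific fact that a flag pseudomanifold is never a cone; this is what produces, for each vertex, the extra non-neighbor beyond the $2(d-1)$ neighbors supplied by the inductive hypothesis. One could instead invoke the $(2d-2)$-connectivity of $G(\Delta)$ from Lemma~\ref{lm: graph conn prop}, but the cone argument is more elementary and keeps the proof self-contained.
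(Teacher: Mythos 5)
The paper does not prove this proposition; it is quoted from Athanasiadis \cite[Proposition~2.2]{Athanasiadis-flag prop}, so there is no internal proof to compare against. Your argument is correct and self-contained. The double-counting identity $i\,f_{i-1}(\Delta)=\sum_v f_{i-2}(\lk(v))$ is standard; vertex links of a flag pseudomanifold are indeed flag $(d-2)$-pseudomanifolds (purity is inherited, ridges of $\lk(v)$ correspond bijectively to ridges of $\Delta$ containing $v$, and flagness of links is Lemma~\ref{lm: face link prop}); and the cone argument is the right place where flagness enters, since a flag complex with a vertex adjacent to all others is literally $v\ast\lk(v)$, which fails the pseudomanifold condition at every facet of the base. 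The algebra $2d\cdot 2^{i-1}\binom{d-1}{i-1}=2^i\,i\binom{d}{i}$ checks out, and the rigidity case is handled cleanly: $\binom{d-1}{i-1}\ge 1$ for $1\le i\le d$, so equality for a single such $i$ forces $f_0=2d$, which together with $\deg(v)\ge 2(d-1)$ and the existence of a non-neighbor pins the graph to $K_{2d}$ minus a perfect matching, and flagness then determines $\Delta$. For the record, Athanasiadis's published proof proceeds a bit differently (he shows any facet extends to an induced octahedral $(d-1)$-sphere inside $\Delta$ and derives the bounds and the connectivity theorem from that), whereas your induction-via-links route is closer in spirit to how the paper later proves Proposition~\ref{lm: lower bound g2}; both work, and yours has the advantage of handling the equality case with almost no extra effort.
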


For any flag complex $\Delta$ and vertex $v\in\Delta$, we have $h_i(\Delta)=h_i(\Delta\backslash v)+h_{i-1}(\lk(v))$ whenever $\dim(\Delta)=\dim(\Delta\backslash v)$. this identity together with the monotonicity of the $h$-vectors of simplicial spheres \cite{Stanley-monotonicity} yields the lower bounds on the $h$-numbers \cite[Theorem 1.3]{Athanasiadis-flag prop}.
\begin{proposition}\label{lm: lower bound h-numbers}
	Let $\Delta$ be a flag homology $(d-1)$-sphere. Then $h_i(\Delta)\geq \binom{d}{i}$.
\end{proposition}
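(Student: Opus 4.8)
The plan is to argue by induction on $d$. The base case $d=1$ is immediate: the only flag homology $0$-sphere is the two-point complex $S^0$, whose $h$-vector is $(1,1)=\bigl(\binom{1}{0},\binom{1}{1}\bigr)$.

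For the inductive step, fix any vertex $v$ of $\Delta$. A homology $(d-1)$-sphere is not contractible and hence not a cone, so some facet of $\Delta$ avoids $v$; thus $\dim(\Delta\setminus v)=d-1$, which makes the displayed identity $h_i(\Delta)=h_i(\Delta\setminus v)+h_{i-1}(\lk(v))$ available. By Lemma~\ref{lm: face link prop} the link $\lk(v)$ is again flag, and it is a homology $(d-2)$-sphere because $\Delta$ is a homology $(d-1)$-sphere; so the inductive hypothesis applies and yields $h_{i-1}(\lk(v))\ge\binom{d-1}{i-1}$. In view of the identity and of $\binom{d-1}{i-1}+\binom{d-1}{i}=\binom di$, the statement reduces to the ``ball estimate''
\[
h_i(\Delta\setminus v)\ \ge\ \binom{d-1}{i}\qquad(0\le i\le d-1).
\]

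To attack the ball estimate I would use that $\Delta\setminus v$ is flag (Lemma~\ref{lm: face link prop}) and, $\Delta$ being a homology sphere, is a homology $(d-1)$-ball whose boundary is the flag homology $(d-2)$-sphere $\lk(v)$ (if convenient, glue a cone over $\lk(v)$ onto $\Delta\setminus v$ to recover an auxiliary flag homology $(d-1)$-sphere with the same $h$-vector as $\Delta$). The goal is the comparison $h_i(\Delta\setminus v)\ge h_i(\lk(v))$ for $0\le i\le d-1$, the right-hand side being the $h$-vector of the $(d-2)$-dimensional complex $\lk(v)$; this should follow from Stanley's monotonicity for $h$-vectors~\cite{Stanley-monotonicity} together with the relative Cohen--Macaulayness of the pair $(\Delta\setminus v,\lk(v))$. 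Granting it, the inductive hypothesis applied to the flag homology $(d-2)$-sphere $\lk(v)$ gives $h_i(\lk(v))\ge\binom{d-1}{i}$ and closes the induction. (As a check: for the octahedral sphere $\Delta\setminus v$ is a cone over the octahedral $(d-2)$-sphere, and since coning preserves the $h$-polynomial while the join of $d-1$ copies of $S^0$ has $h$-polynomial $(1+t)^{d-1}$, all these inequalities become equalities.) The main obstacle is precisely this ball estimate: establishing that the $h$-vector of the antistar $\Delta\setminus v$ dominates that of its boundary sphere $\lk(v)$ is where Stanley's monotonicity has to be brought to bear, and it requires care that the Cohen--Macaulay hypotheses genuinely hold for the pair $(\Delta\setminus v,\lk(v))$ and that the comparison is made between complexes of matching dimension. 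Everything else — the deletion identity, the two applications of the inductive hypothesis (to $\lk(v)$ as the link of $v$, and to $\lk(v)$ again as the boundary of $\Delta\setminus v$), and the Pascal identity — is routine.
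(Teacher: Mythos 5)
Your overall strategy — induction on $d$, the deletion identity $h_i(\Delta)=h_i(\Delta\setminus v)+h_{i-1}(\lk(v))$, Pascal's rule, and Stanley's monotonicity — is exactly the route taken in the paper (which defers to \cite[Theorem 1.3]{Athanasiadis-flag prop}). But the specific step you isolate as the ``ball estimate'' is where your argument breaks, and the cause is precisely the dimension-matching caveat you yourself flag at the end. Stanley's monotonicity theorem compares the $h$-vectors of two Cohen--Macaulay complexes $\Gamma'\subseteq\Gamma$ \emph{of the same dimension}. You want to apply it to the pair $(\Delta\setminus v,\lk(v))$, but $\Delta\setminus v$ is $(d-1)$-dimensional while $\lk(v)$ is $(d-2)$-dimensional, so the theorem does not apply. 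Nor is the inequality $h_i(\Delta\setminus v)\ge h_i(\lk(v))$ a general fact about a ball and its boundary sphere: if $\Delta=\partial\Delta^2$ (a triangle) and $v$ is any vertex, then $\Delta\setminus v$ is a single edge with $h=(1,0,0)$ while $\lk(v)$ is two points with $h=(1,1)$, so $h_1$ already fails. That example is not flag, of course, but your argument never actually invokes flagness to get around this, and the auxiliary coning you suggest only reproduces $\Delta$ and gives back $h_i(\Delta)\ge h_i(\lk(v))$, which is the weaker statement with $\binom{d-1}{i}$ rather than $\binom{d}{i}$ on the right.

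The missing idea, which is where flagness genuinely enters, is that $\Delta\setminus v$ contains a \emph{full-dimensional} cone. Because $\Delta$ is a sphere it is not a cone over $\lk(v)$, so some facet $F$ avoids $v$; by flagness $F$ cannot lie entirely in $\lk(v)$ (else $F\cup\{v\}$ would be a clique, hence a face), so $F$ contains a vertex $u$ with $\{u,v\}\notin\Delta$. Then $\st(u)\subseteq\Delta\setminus v$ and $\st(u)$ is a $(d-1)$-dimensional Cohen--Macaulay subcomplex (a cone over the flag homology $(d-2)$-sphere $\lk(u)$). Now Stanley's monotonicity \emph{does} apply to the same-dimensional CM pair $\st(u)\subseteq\Delta\setminus v$ and gives $h_i(\Delta\setminus v)\ge h_i(\st(u))=h_i(\lk(u))$, and your inductive hypothesis, applied to $\lk(u)$ rather than to $\lk(v)$, yields $h_i(\lk(u))\ge\binom{d-1}{i}$. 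Everything else in your write-up then goes through unchanged. In short: replace the comparison target $\lk(v)$ by $\st(u)$ for a non-neighbor $u$ of $v$, and note that the existence of such a $u$ is exactly the point where the flag hypothesis is used.
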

\begin{remark}
	There is a generalization of the above lemma for flag Buchsbaum$^*$ complexes, see Proposition 5.5 and Section 6 in \cite{Athanasiadis-Buchsbaum}.
\end{remark}
Our next goal is to obtain lower bounds on the $g$-numbers. For $g_2$, this result is likely known to the experts. However, it seems to be missing from the literature. In the following we prove the lower bound on $g_2$ using the rigidity theory. Let $\field$ be an infinite field and assume that $V(\Delta)=[n]$. The \emph{Stanley-Reisner ring} of $\Delta$ is $\field[\Delta]=\field[x_1,x_2,\dots, x_n]/I_\Delta$, where the \emph{Stanley-Reisner ideal} is \[I_\Delta=(x_{i_1}\dots x_{i_k}:\{i_1,\dots, i_k\}\notin \Delta).\]
A $(d-1)$-dimensional complex is \emph{$\field$-rigid} if for generic linear forms $\theta_1, \dots, \theta_{d+1}$ and $1\leq i\leq d+1$, the multiplication map $\cdot \theta_i: \field[\Delta]/(\theta_1, \dots, \theta_{i-1})_1 \to \field[\Delta]/(\theta_1, \dots, \theta_{i-1})_2$ is injective. For this interpretation of rigidity, see \cite{Lee-stress space}. We also refer to \cite{Kalai-rigidity, Stanley-green-book} for basics in the rigidity theory of frameworks and Stanley-Reisner ring respectively.
\begin{lemma}\label{lm: k-rigidity of complex minus a pt}
	Let $d\geq 4$ and $\Delta$ be a flag normal $(d-1)$-pseudomanifold. Then for any vertex $v$, $\Delta \backslash v$ is $\field$-rigid.
\end{lemma}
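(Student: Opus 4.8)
\textit{Proof plan.} The plan is to prove $\field$-rigidity of $\Delta\setminus v$ by induction on $d$, assembling $\Delta\setminus v$ from small $\field$-rigid subcomplexes. I will use three standard facts about $\field$-rigidity (see \cite{Kalai-rigidity, Lee-stress space}): the \emph{cone lemma}, that a $(d-2)$-complex $\Gamma$ is $\field$-rigid if and only if $u*\Gamma$ is a $\field$-rigid $(d-1)$-complex; the \emph{gluing lemma}, that the union of two $\field$-rigid $(d-1)$-complexes whose intersection contains a $(d-1)$-simplex is again $\field$-rigid; and Fogelsanger's theorem \cite{Fogelsanger}, that every normal $k$-pseudomanifold with $k\geq 2$ is $\field$-rigid. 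Since $d\geq 4$, Lemma~\ref{lm: face link prop} shows that every vertex link $\lk(w,\Delta)$ is a flag normal $(d-2)$-pseudomanifold of dimension $d-2\geq 2$, hence $\field$-rigid; by the cone lemma each closed vertex star $\st(w,\Delta)=w*\lk(w,\Delta)$ is a $\field$-rigid $(d-1)$-complex, and $\st(w,\Delta)\setminus v=w*(\lk(w,\Delta)\setminus v)$.

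For the inductive step, assume $d\geq 5$. If $u$ is not adjacent to $v$ then the whole star $\st(u,\Delta)$ lies in $\Delta\setminus v$ and is $\field$-rigid. If $w$ is a vertex of $\lk(v,\Delta)$ then $\lk(w,\Delta)$ is a flag normal $(d-2)$-pseudomanifold with $d-2\geq 3$, so by the inductive hypothesis $\lk(w,\Delta)\setminus v$ is $\field$-rigid, and the cone lemma makes $\st(w,\Delta)\setminus v$ a $\field$-rigid $(d-1)$-complex. These pieces, one for each vertex of $V\setminus v$, cover $\Delta\setminus v$: any face of $\Delta$ not containing $v$ lies in the piece attached to each of its vertices. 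Now enumerate $V\setminus v$ so that every vertex after the first is adjacent in $G(\Delta)$ to an earlier one, which is possible because $G(\Delta)\setminus v$ is connected --- indeed $(2d-3)$-connected --- by Lemma~\ref{lm: graph conn prop}. Whenever $a\sim b$ in $\Delta$, the edge $\{a,b\}$ lies in some facet of $\Delta$ avoiding $v$: its link is a flag normal $(d-3)$-pseudomanifold, which by Proposition~\ref{lm: lower bound f-numbers} has at least $2(d-2)$ vertices and, being a pseudomanifold, is not a cone, so one of its facets misses $v$. Hence consecutive pieces in the enumeration share a full $(d-1)$-simplex, and applying the gluing lemma repeatedly along the enumeration shows that the union of all the pieces, namely $\Delta\setminus v$, is $\field$-rigid.

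What remains, and what I expect to be the main obstacle, is the base case $d=4$. There $\lk(w,\Delta)$ is merely a flag surface, so $\lk(w,\Delta)\setminus v$ is a surface-with-boundary whose boundary cycle has length at least $4$; an Euler-characteristic count then gives it fewer than $3n-6$ edges on its $n$ vertices, so it is not $\field$-rigid, and the truncated stars $\st(w,\Delta)\setminus v$ cease to be $\field$-rigid. The uniform assembly above therefore breaks, and one must instead work only with the stars $\st(u,\Delta)$, $u\not\sim v$ --- which are still $\field$-rigid and, by a short flagness argument (otherwise some vertex link would be forced to be a cone), already meet every vertex of $V\setminus v$ --- and glue them to the region around $L:=\lk(v,\Delta)$ using the additional structure peculiar to flag complexes: $L$ itself is a $\field$-rigid flag surface by Fogelsanger's theorem, $G(\Delta)$ is $6$-connected by Lemma~\ref{lm: graph conn prop}, and the identity $\lk(\sigma_1\cup\sigma_2)=\lk(\sigma_1)\cap\lk(\sigma_2)$ forces the pieces around $L$ to overlap heavily; alternatively one reduces to the established rigidity results for $3$-dimensional flag spheres and pseudomanifolds. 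This dimension-$4$ case, where the inductive hypothesis is not available and the vertex links drop to dimension $2$, is the crux.
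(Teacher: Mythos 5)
Your base case $d=4$, which you correctly flag as the crux, is left unproved: you describe why the argument breaks there and gesture at possible repairs (``a short flagness argument,'' ``reduce to established rigidity results for $3$-dimensional flag spheres''), but no argument is actually given. This is a genuine gap. The difficulty is self-inflicted: your decomposition for $d\geq 5$ includes the truncated stars $\st(w,\Delta)\setminus v = w*(\lk(w)\setminus v)$ for $w\in\lk(v)$, whose rigidity forces the induction and fails outright at $d=4$.

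The paper's proof avoids induction entirely and works uniformly for all $d\geq 4$ by the observation you only glimpse at the end: the full stars $\st(u)$ with $u\notin\st(v)$ already \emph{cover} $\Delta\setminus v$, so the truncated stars are never needed. The covering is a flagness argument about facets, not vertices: if a facet $F$ of $\Delta\setminus v$ had all its vertices in $\lk(v)$, then $F\cup\{v\}$ would be a clique in $G(\Delta)$, hence a face, contradicting maximality of $F$ in $\Delta\setminus v$. Thus every facet of $\Delta\setminus v$ contains some $u\not\sim v$, giving $\Delta\setminus v=\bigcup_{u\notin\st(v)}\st(u)$. Your version of the flagness argument only shows these stars meet every vertex, which is strictly weaker than what the gluing lemma requires. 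With the covering in hand, one orders the vertices $u\notin\st(v)$ so that initial segments induce connected subgraphs; for consecutive $u_i\sim u_j$, the link $\lk(\{u_i,u_j\})$ cannot contain $v$ (since $v\notin\lk(u_i)$), so any of its facets yields a full-dimensional face of $\Delta\setminus v$ shared by $\st(u_i)$ and $\st(u_j)$, and the gluing lemma finishes the proof for all $d\geq 4$ at once.
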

\begin{proof}
	First we claim that $\Delta \backslash v=\cup_{u\notin\st(v)} \st(u)$. Indeed, if there is a facet $F$ of $\Delta\backslash v$ with $V(F)\subseteq V(\lk(v))$, then by the flagness of $\Delta$, $F\cup v\in\Delta$, a contradiction. Hence every facet of $\Delta \backslash v$ must contain at least one vertex from $\Delta\backslash v-\lk(v)$, i.e., $\Delta \backslash v\subseteq\cup_{u\notin \st(v)} \st(u)$. The other inclusion is obvious.
	
	Since $\lk(v)$ is a normal $(d-2)$-pseudomanifold, it is $\field$-rigid. By the cone lemma \cite[Lemma 5.3]{Novik-Swartz}, $\st(v)$ is also $\field$-rigid. Furthermore, the vertices of $\Delta\backslash v-\lk(v)$ can be ordered as $(u_1,u_2,\dots)$ so that the induced graph on any initial segment is connected, and hence $\st(u_i) \cap (\cup_{j<i} \st(u_j))$ contains a facet of $\Delta\backslash v$. So by the gluing lemma \cite[Lemma 5.4]{Novik-Swartz}, $\Delta \backslash v$ is $\field$-rigid.
\end{proof}
\begin{proposition}\label{lm: lower bound g2}
	Let $d\geq 3$ and $\Delta$ be a flag normal $(d-1)$-pseudomanifold. Then $g_2(\Delta)\geq \binom{d}{2}-d$. Furthermore when $d\geq 4$, $g_2(\Delta)=\binom{d}{2}-d$ if and only if $\Delta$ is the octahedral $(d-1)$-sphere.
\end{proposition}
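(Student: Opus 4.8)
The plan is to induct on $d$ and to lean on Lemma~\ref{lm: k-rigidity of complex minus a pt} (the $\field$-rigidity of $\Delta\setminus v$); the equality case will drop out of the proof. For $d=3$ no rigidity is needed: a normal $2$-pseudomanifold is a connected closed surface, so $2f_1=3f_2$ and $f_0-f_1+f_2=\chi\le 2$ give $f_1=3f_0-3\chi$, whence $g_2=h_2-h_1=f_1-3f_0+6=3(2-\chi)\ge 0=\binom{3}{2}-3$. Note equality holds here for every flag $2$-sphere, which is why the equality clause is only asserted for $d\ge 4$.

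Fix $d\ge 4$ and assume the result for flag normal $(d-2)$-pseudomanifolds. Since $\Delta$ is a pseudomanifold of positive dimension it is not a cone, so $\Delta\setminus v$ is again $(d-1)$-dimensional and (as $\Delta$ is flag) $v$ has a non-neighbour; combining $g_2=h_2-h_1=f_1-df_0+\binom{d+1}{2}$ with $f_0(\Delta)=f_0(\Delta\setminus v)+1$ and $f_1(\Delta)=f_1(\Delta\setminus v)+\deg_\Delta(v)$ yields the deletion identity
\[
g_2(\Delta)=g_2(\Delta\setminus v)+\big(\deg_\Delta(v)-d\big).
\]
The heart of the argument is to replace the trivial bound $g_2(\Delta\setminus v)\ge 0$ by $g_2(\Delta\setminus v)\ge\binom{d-1}{2}-(d-1)$. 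Choose a non-neighbour $u$ of $v$; since $u\not\sim v$, the closed star $\st(u,\Delta\setminus v)$ equals $u*\lk(u,\Delta)$, a $(d-1)$-dimensional subcomplex of $\Delta\setminus v$, and coning does not change the $h$-vector, so $g_2(u*\lk(u,\Delta))=g_2(\lk(u,\Delta))\ge\binom{d-1}{2}-(d-1)$ by the inductive hypothesis applied to the flag (Lemma~\ref{lm: face link prop}) normal $(d-2)$-pseudomanifold $\lk(u,\Delta)$. Finally, whenever $\Sigma'\subseteq\Sigma$ are $(d-1)$-complexes with $\Sigma$ $\field$-rigid, one has $g_2(\Sigma)\ge g_2(\Sigma')$: indeed $g_2(\Sigma')$ is at most the dimension of the self-stress space of a generic realization of $\Sigma'$, that space injects by extension-by-zero into the self-stress space of $\Sigma$, and for the $\field$-rigid $\Sigma$ the latter has dimension exactly $g_2(\Sigma)$ (this is the rigidity/stress dictionary, cf.\ \cite{Lee-stress space,Kalai-rigidity}). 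Taking $\Sigma=\Delta\setminus v$, $\field$-rigid by Lemma~\ref{lm: k-rigidity of complex minus a pt}, and $\Sigma'=u*\lk(u,\Delta)$ gives the bound.

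Substituting into the deletion identity and using $\deg_\Delta(v)=f_0(\lk(v,\Delta))\ge 2(d-1)$ (Proposition~\ref{lm: lower bound f-numbers} for $\lk(v,\Delta)$) gives
\[
g_2(\Delta)\ \ge\ \Big(\binom{d-1}{2}-(d-1)\Big)+\big(2(d-1)-d\big)\ =\ \binom{d-1}{2}-1\ =\ \binom{d}{2}-d .
\]
For equality with $d\ge 4$: if $g_2(\Delta)=\binom{d}{2}-d$ then, since the displayed chain holds for every vertex $v$, we must have $\deg_\Delta(v)=2(d-1)$ for all $v$; hence $f_1(\Delta)=(d-1)f_0(\Delta)$, so $\binom{d}{2}-d=g_2(\Delta)=\binom{d+1}{2}-f_0(\Delta)$ forces $f_0(\Delta)=2d$, and then the equality clause of Proposition~\ref{lm: lower bound f-numbers} makes $\Delta$ the octahedral $(d-1)$-sphere. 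Conversely the octahedral $(d-1)$-sphere has $h_i=\binom{d}{i}$, so $g_2=\binom{d}{2}-d$.

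The main obstacle is exactly this sharpened estimate $g_2(\Delta\setminus v)\ge\binom{d-1}{2}-(d-1)$: mere $\field$-rigidity of $\Delta\setminus v$ only gives $g_2(\Delta\setminus v)\ge 0$ and hence $g_2(\Delta)\ge d-2$, which meets $\binom{d}{2}-d$ only at $d=4$. The improvement is possible because $\Delta\setminus v$ is \emph{genuinely} $\field$-rigid, so that its $g_2$ \emph{equals}, rather than merely bounds from below, the dimension of its generic self-stress space — into which we can inject the self-stresses of the cone over one vertex link, whose size is controlled by induction. The delicate points are to make this comparison of self-stress spaces rigorous in the normal-pseudomanifold (not necessarily sphere) setting, and to confirm that Lemma~\ref{lm: k-rigidity of complex minus a pt} applies so that $\Delta\setminus v$ may indeed be treated as rigid.
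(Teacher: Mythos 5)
Your proof is correct and follows essentially the same strategy as the paper: induction on $d$, the deletion identity $g_2(\Delta)=g_2(\Delta\setminus v)+g_1(\lk(v))$, and the key estimate $g_2(\Delta\setminus v)\ge g_2(\lk(u))$ for a non-neighbour $u$ of $v$, obtained from the $\field$-rigidity of $\Delta\setminus v$ (Lemma~\ref{lm: k-rigidity of complex minus a pt}) together with the inclusion $\st(u,\Delta)\subseteq\Delta\setminus v$. The only real differences are cosmetic: the paper performs the rigidity comparison in the Stanley--Reisner/cokernel language (a surjection $\field(\Delta\setminus v)\twoheadrightarrow\field(\lk(u))$ inducing a surjection of cokernels of $\cdot\omega$) while you phrase the same fact via extension-by-zero of self-stresses --- equivalent over characteristic $0$ by Lee's dictionary \cite{Lee-stress space} --- and you settle the $d=3$ base case with Euler characteristic and the equality case by a direct $f_0$-count rather than through the vertex links, both of which are fine.
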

\begin{proof}
	We prove by induction on the dimension. Let $\field$ be an infinite field of characteristic zero. For $d=3$, $g_2(\Delta)\geq 0$ follows from the fact that all triangulated 2-manifolds are $\field$-rigid. Let $d\geq 4$. For any vertex $v$ of $\Delta$, $\dim \Delta \backslash v=\dim \Delta$. By Lemma 4.1 in \cite{Athanasiadis-flag prop}, $h_i(\Delta)=h_i(\Delta \backslash v)+h_{i-1}(\lk_\Delta v)$ for $i=1,2$, and hence $g_2(\Delta)=g_2(\Delta\backslash v)+g_1(\lk(v))$. Since $\Delta$ is a flag normal pseudomanifold, there is a vertex $u$ such that $\{u,v\}\notin \Delta$. Let $\omega\in \field[x_1,\dots, x_{f_0}]_1$ and $\Theta$ a generic linear system of parameters for $\Delta$. Note that both $\Delta\backslash v$ and $\st(u)$ are induced subcomplexes of $\Delta$ and furthermore $\Delta\backslash v\supseteq \st(u)$. So there is a surjection $p: \field[\Delta \backslash v] \to \field[\st(u)]\cong \field[\lk(u)]$. In the following let $\field(\Gamma)=\field[\Gamma]/\Theta$ for $\Gamma=\Delta \backslash v$ or $\lk(u)$. The rigidity theory gives us the following commutative diagram.
	\[
	\xymatrix{
		\field(\Delta \backslash v)_1 \ar@{^{(}->}[r]^{\phi_1=\cdot \omega} \ar@{->>}[d]^{\bar{p}_1} & \field(\Delta \backslash v)_2 \ar[r] \ar@{->>}[d]^{\bar{p}_2} & {\rm{coker}}\phi_1 \ar[r] \ar[d]^{q} & 0 \\
		\field(\lk(u))_1 \ar@{^{(}->}[r]^{\phi_2=\cdot \omega}  & \field(\lk(u))_2 \ar[r] & {\rm{coker}}\phi_2 \ar[r]  & 0.
	}
	\]
	Furthemore, since $\Delta\backslash v$ is $\field$-rigid, \[\dim_\field {\rm{coker}}\phi_1=\dim_\field \field(\Delta \backslash v)_2-\dim_\field \field(\Delta\backslash v)_{1}=h_2(\Delta\backslash v)- h_1(\Delta\backslash v)=g_2(\Delta \backslash v).\] Similarly, $\dim_\field {\rm{coker}}\phi_2=g_2(\lk(u))$. The fact that the above diagram commutes implies that $q$ is also surjective. Hence by the inductive hypothesis, $g_2(\Delta\backslash v)\geq g_2(\lk(u))\geq \binom{d-1}{2}-(d-1)$. Notice that $g_1(\lk(v))\geq d-2$ for every vertex $v$, so $g_2(\Delta)\geq \binom{d-1}{2}-(d-1)+(d-2)=\binom{d}{2}-d$. When $d\geq 4$, the equality holds if and only if the $g_2$ of every vertex link is exactly $d-2$, i.e., it is octahedral. Hence $\Delta$ is the octahedral sphere.
\end{proof}

What about the lower bounds on the other $g$-numbers? As we will see in Section 4, the following conjecture would follow if the nonnegative of the $\gamma$-vectors holds. However, even this conjecture is open.
\begin{conjecture}
	Let $\Delta$ be a flag homology $(d-1)$-sphere. Then $g_i(\Delta)\geq \binom{d}{i}-\binom{d}{i-1}$. Equality holds if and only if $\Delta$ is the boundary complex of the $d$-cross-polytope.
\end{conjecture}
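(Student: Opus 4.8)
Since this is an open conjecture, I describe a plan rather than a complete argument. The statement splits into the inequalities $g_i(\Delta)\ge\binom{d}{i}-\binom{d}{i-1}$ and the equality characterization, and I would treat these separately. Write $O_d$ for the octahedral $(d-1)$-sphere (the boundary complex of the $d$-cross-polytope), so that $g_i(O_d)=\binom{d}{i}-\binom{d}{i-1}$. For $i=1$ the desired inequality reads $f_0(\Delta)\ge 2d$, which is Proposition~\ref{lm: lower bound f-numbers}, and for $i=2$ it is Proposition~\ref{lm: lower bound g2}; so only the range $3\le i\le\lfloor d/2\rfloor$ remains, and I would derive it from Gal's conjecture that $\gamma_j(\Delta)\ge 0$ (discussed in the introduction and Section~4). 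Recall that for a homology $(d-1)$-sphere the Dehn--Sommerville relations make $h_\Delta(t)$ symmetric, so it has a unique expansion
\[
h_\Delta(t)=\sum_{j=0}^{\lfloor d/2\rfloor}\gamma_j(\Delta)\,t^{j}(1+t)^{d-2j},\qquad \gamma_0(\Delta)=h_0(\Delta)=1 .
\]
Extracting the coefficient of $t^{i}$ gives $h_i(\Delta)=\sum_j\gamma_j(\Delta)\binom{d-2j}{i-j}$, hence for $1\le i\le\lfloor d/2\rfloor$
\[
g_i(\Delta)=\sum_{j=0}^{i}\gamma_j(\Delta)\left(\binom{d-2j}{\,i-j\,}-\binom{d-2j}{\,i-1-j\,}\right).
\]
Each bracket here is positive: with $n:=d-2j\ge 0$ and $k:=i-j\ge 0$ one has $2k=2(i-j)\le d-2j=n$ (because $i\le\lfloor d/2\rfloor$), and $\binom{n}{k}-\binom{n}{k-1}>0$ whenever $2k\le n$ (with $\binom{n}{-1}:=0$); moreover the $j=0$ term is exactly $\binom{d}{i}-\binom{d}{i-1}$. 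So, granting $\gamma_j(\Delta)\ge 0$, the summands with $j\ge 1$ are nonnegative and $g_i(\Delta)\ge\binom{d}{i}-\binom{d}{i-1}$.

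For the equality characterization no appeal to Gal's conjecture is needed: it reduces to the unconditional fact that \emph{a flag homology $(d-1)$-sphere $\Delta$ with $f_0(\Delta)=2d$ equals $O_d$}. Indeed, if $g_i(\Delta)=\binom{d}{i}-\binom{d}{i-1}$ for all $i$, then the $i=1$ case gives $f_0(\Delta)=2d$, so that fact forces $\Delta=O_d$; the converse direction is just the computation $g_i(O_d)=\binom{d}{i}-\binom{d}{i-1}$. To prove the fact I would argue as follows: for every vertex $v$ the link $\lk(v)$ is a flag homology $(d-2)$-sphere by Lemma~\ref{lm: face link prop}, hence $f_0(\lk(v))\ge 2(d-1)$ by Proposition~\ref{lm: lower bound f-numbers}; since the $2d$ vertices of $\Delta$ are $v$, its $f_0(\lk(v))$ neighbours, and its non-neighbours, $v$ has at most one non-neighbour. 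It has at least one, for otherwise $v$ is adjacent to all other vertices, so by flagness $\lk(v)=\Delta[V\setminus v]=\Delta\setminus v$ and then $\Delta=\st(v)\cup(\Delta\setminus v)=v*\lk(v)$ is a cone, contradicting that a homology sphere is not $\field$-acyclic. Thus every vertex has exactly one non-neighbour, so $G(\Delta)$ is the complete $d$-partite graph $K_{2,\dots,2}$; being flag, $\Delta$ is the clique complex of $G(\Delta)$, i.e.\ $\Delta=O_d$.

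The main --- essentially the only --- obstacle is Gal's conjecture $\gamma_j(\Delta)\ge 0$, which is verified only in special cases (all $3$-dimensional flag spheres, barycentric subdivisions; see Section~4) and is wide open in general. One might instead try to extend the rigidity method behind Proposition~\ref{lm: lower bound g2}: for a non-edge $\{u,v\}$ of $\Delta$ one would want a weak-Lefschetz-type map on $\field[\Delta\setminus v]/\Theta$ in degree $i$ whose cokernel has dimension $g_i(\Delta\setminus v)$ and which surjects, through the restriction $\field[\Delta\setminus v]\twoheadrightarrow\field[\lk(u)]$, onto the analogous cokernel for $\lk(u)$; this would give $g_i(\Delta\setminus v)\ge g_i(\lk(u))$, and then $g_i(\Delta)=g_i(\Delta\setminus v)+g_{i-1}(\lk(v))$ together with the Pascal identity $\binom{d}{i}-\binom{d}{i-1}=\bigl(\binom{d-1}{i}-\binom{d-1}{i-1}\bigr)+\bigl(\binom{d-1}{i-1}-\binom{d-1}{i-2}\bigr)$ would close an induction on $d$. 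The sticking point is producing such a Lefschetz map in degrees $i\ge 3$ --- $\field$-rigidity only controls degree $1$ --- which is why nothing beyond $g_2$ is currently known.
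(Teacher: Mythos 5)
This is stated in the paper as an open conjecture (with no proof), and the sentence immediately preceding it says exactly that the inequality would follow from Gal's $\gamma$-nonnegativity conjecture; your reduction is therefore the same one the paper has in mind, and you correctly flag the whole thing as open. Your algebra is right: extracting the coefficient of $t^i$ from $h_\Delta(t)=\sum_j\gamma_j t^j(1+t)^{d-2j}$ gives $h_i=\sum_j\gamma_j\binom{d-2j}{i-j}$, and for $i\le\lfloor d/2\rfloor$ each bracket $\binom{d-2j}{i-j}-\binom{d-2j}{i-j-1}$ is strictly positive since $2(i-j)\le d-2j$, with the $j=0$ term producing exactly $\binom{d}{i}-\binom{d}{i-1}$. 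Your combinatorial lemma that a flag homology $(d-1)$-sphere with $f_0=2d$ must be the octahedral sphere is also correct (it is the equality case of Proposition~\ref{lm: lower bound f-numbers}).

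One small imprecision: you claim the equality characterization needs ``no appeal to Gal's conjecture,'' but you prove only the weaker version where equality is assumed for \emph{all} $i$ (so that the $i=1$ case hands you $f_0=2d$). If the conjecture is read in the natural way --- equality at a \emph{single} index $i\ge 2$ already forces the cross-polytope --- then to get from $g_i=\binom{d}{i}-\binom{d}{i-1}$ to $\gamma_1=0$ (hence $f_0=2d$) you must use the strict positivity of the brackets together with $\gamma_j\ge 0$, i.e.\ Gal's conjecture again. So the equality direction is not genuinely independent of Gal for $i\ge 3$; you are correct, though, that it is unconditional for $i=1$ and (by Proposition~\ref{lm: lower bound g2}) for $i=2$. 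Your closing remark on why the rigidity argument of Lemma~\ref{lm: k-rigidity of complex minus a pt} and Proposition~\ref{lm: lower bound g2} does not extend --- that it only controls the degree-one multiplication map, while $g_i$ for $i\ge 3$ would require a Lefschetz-type statement in higher degrees --- accurately identifies the obstruction.
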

\section{Face vectors of flag complexes}
Before we dive into the face vectors of flag spheres or manifolds, let's warm up by discussing the face vectors of flag complexes without placing any conditions on the geometric realization. For positive integers $n$ and $i$, $n$ can be expressed uniquely in the form
\[n=\binom{n_i}{i}+\binom{n_{i-1}}{i-1}+\cdots+\binom{n_j}{j},\] where $n_i>n_{i-1}>\dots >n_j\geq j\geq 1$. Define 
\[n^{(i)}=\binom{n_i}{i+1}+\binom{n_{i-1}}{i}+\cdots+\binom{n_j}{j+1},\]
where $\binom{k}{l}=0$ if $l>k$. Also we say a simplicial complex $\Delta\subseteq 2^{[n]}$ is \emph{compressed} if its set of $k$-faces forms an initial segment with respect to the reverse lexicographic order on the $(k + 1)$-subsets of $[n]$ for each $k$.  Now we are ready to state the Kruskal-Katona theorem \cite{Katona,Kruskal} which characterizes the $f$-vector of simplicial complexes. 
\begin{theorem}
	A vector $f=(f_{-1},f_0,f_1,\dots, f_{d-1})$ is the $f$-vector of a $(d-1)$-dimensional simplicial complex if and only if $f_{-1}=1$ and
	\[f_{i+1}\leq f_i^{(i+1)}, \quad 0\leq i\leq d-2.\]
	Moreover $f$ can be realized as the $f$-vector of a compressed simplicial complex.
\end{theorem}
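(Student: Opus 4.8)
The plan is to separate the claim into the necessity of the inequalities, their sufficiency, and the realization by a compressed complex, and to reduce all three to the classical \emph{shadow} form of the Kruskal--Katona inequality together with an arithmetic lemma about cascade representations. Throughout, for a family $\mathcal A$ of $k$-subsets of $[n]$ I write $\partial\mathcal A$ for the family of $(k-1)$-subsets contained in some member of $\mathcal A$, and for $m\ge 1$ with $k$-cascade $m=\binom{m_k}{k}+\binom{m_{k-1}}{k-1}+\cdots+\binom{m_s}{s}$ I set $\partial_{(k)}m=\binom{m_k}{k-1}+\binom{m_{k-1}}{k-2}+\cdots+\binom{m_s}{s-1}$ (with $\partial_{(k)}0=0$).

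The arithmetic lemma I would establish first: the operation $n\mapsto n^{(i)}$ of the statement and the operation $m\mapsto\partial_{(k)}m$ are strictly increasing on nonnegative integers, and for all nonnegative integers one has $f_{i+1}\le f_i^{(i+1)}$ if and only if $\partial_{(i+2)}f_{i+1}\le f_i$. This is bookkeeping with the Pascal identity applied to cascade expansions, the essential point being a term-by-term comparison of the cascades of the two sides; it lets me pass freely between the ``upward'' inequality of the theorem and the ``downward'' shadow inequality.

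Next, the combinatorial core: for any family $\mathcal A$ of $k$-subsets of $[n]$ one has $|\partial\mathcal A|\ge\partial_{(k)}|\mathcal A|$, with equality when $\mathcal A$ is an initial segment of the reverse lexicographic order of the statement. I would prove this by compression. For $i<j$ let $C_{ij}$ replace $j$ by $i$ in every $A\in\mathcal A$ with $j\in A$, $i\notin A$, and $(A\setminus\{j\})\cup\{i\}\notin\mathcal A$; one shows $|C_{ij}\mathcal A|=|\mathcal A|$ and $|\partial C_{ij}\mathcal A|\le|\partial\mathcal A|$, and that iterating the $C_{ij}$ terminates at a family $\mathcal A'$ stable under all of them. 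For such a stable family one splits $\mathcal A'$ according to whether a set contains the element $1$ and bounds $|\partial\mathcal A'|$ in terms of the shadows of the two pieces, by induction on $n$ and on $k$. Checking that the $C_{ij}$ really do not increase the shadow, and organizing the induction for stable families, is the one genuinely delicate part of the whole argument.

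Granting the shadow theorem, necessity is immediate: $f_{-1}=1$ because $\emptyset\in\Delta$, and for each $i$ the set $F_{i+1}$ of $(i+1)$-faces satisfies $f_i\ge|\partial F_{i+1}|\ge\partial_{(i+2)}f_{i+1}$ since a complex contains every subset of each of its faces; the arithmetic lemma rewrites this as $f_{i+1}\le f_i^{(i+1)}$. For sufficiency and the ``moreover'' clause, given $f$ with $f_{-1}=1$ satisfying the inequalities, fix $n$ large and let $\Delta_k$ be the initial segment of length $f_k$ among the $(k+1)$-subsets of $[n]$ in reverse lexicographic order, for $-1\le k\le d-1$, and $\Delta=\bigcup_k\Delta_k$. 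The shadow of an initial segment is again an initial segment, of size exactly $\partial_{(k+1)}f_k$; since $f_k\le f_{k-1}^{(k)}$ is equivalent, by the arithmetic lemma, to $\partial_{(k+1)}f_k\le f_{k-1}$, this shadow is contained in $\Delta_{k-1}$. Hence $\Delta$ is closed under taking subsets, i.e.\ a simplicial complex; it is compressed by construction, has the prescribed $f$-vector, and has dimension $d-1$ because $f_{d-1}\ge 1$. The shadow theorem is the main obstacle; the rest is the arithmetic lemma and unwinding definitions.
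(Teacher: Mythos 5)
The paper does not prove this statement: it is the classical Kruskal--Katona theorem, quoted as background with citations to Kruskal and Katona. Your argument is the standard compression (shifting) proof and its overall architecture is sound: pass from the ``upward'' form $f_{i+1}\le f_i^{(i+1)}$ to the ``shadow'' form $|\partial\mathcal A|\ge\partial_{(k)}|\mathcal A|$ via an arithmetic lemma, prove the shadow bound for arbitrary families by iterating the exchange operators $C_{ij}$ until the family is shifted and then inducting on the split by membership of the smallest element, and finally read off necessity directly and sufficiency (together with the ``compressed'' realization) by taking reverse-lex initial segments level by level and using the fact that the shadow of an initial segment is an initial segment of the predicted size. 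This is the modern textbook route, as opposed to the rather different inductive counting in Kruskal's and Katona's original papers, and it also yields the ``Moreover'' clause with no extra work, which the originals did not emphasize.

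One correction to the arithmetic lemma as you stated it: neither $n\mapsto n^{(i)}$ nor $m\mapsto\partial_{(k)}m$ is \emph{strictly} increasing. For instance $1^{(2)}=\binom{2}{3}=0$ and $2^{(2)}=\binom{2}{3}+\binom{1}{2}=0$, and likewise $\partial_{(2)}4=\partial_{(2)}5=\partial_{(2)}6=4$. Both maps are only weakly increasing. This does not break your argument: the equivalence $f_{i+1}\le f_i^{(i+1)}\Leftrightarrow \partial_{(i+2)}f_{i+1}\le f_i$ is a Galois-connection statement, and what it actually requires is weak monotonicity of both maps together with the two cascade inequalities $\partial_{(i+2)}\bigl(n^{(i+1)}\bigr)\le n$ and $\bigl(\partial_{(i+2)}m\bigr)^{(i+1)}\ge m$, each of which is the term-by-term Pascal bookkeeping you allude to. You should state the lemma in that form rather than claiming strict monotonicity. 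The remaining acknowledged gap, that $C_{ij}$ does not increase the shadow and that the induction for shifted families closes, is indeed the substantive content of the shadow theorem; once that is carried out, the rest of your writeup is correct, including the observation that the inequalities force each level's shadow into the previous level's initial segment, so the union of initial segments is automatically a simplicial complex.
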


Is there a Krukal-Katona type of theorem for flag complexes? Alternatively, given a graph with a fixed number of $k$-cliques, how many
$(k + 1)$-cliques can it have? Unlike the simplicial complexes, where we can add one face at a time and change the $f$-vector gradually, adding a single edge to a flag complex may cause the face vector of the resulting flag complex to vary dramatically. So it is natural to ask how many $i$-faces a flag $(d-1)$-dimensional complex with $n$ vertices can have. The celebrated T\'uran's theorem states that given any graph $G$ with $n$ vertices and no cliques of size $r+1$, the T\'uran graph has the maximum number of edges. There are many generalization of this theorem regarding the number of cliques in graphs, see, for example, \cite{Eckhoff}. Another interesting conjecture which gives restriction on the range of $f$-vectors was proposed by Kalai (unpublished, see \cite{Stanley-green-book}) and Eckhoff \cite{Eckhoff-conj} independently. Frohmader established it as a theorem \cite{Frohmader-Kalai's conjecture}. Recall that a $(d-1)$-dimensional simplicial complex is called balanced if its graph is $d$-colorable. 
\begin{theorem}
	The $f$-vector of a flag complex is the $f$-vector of a certain balanced complex. 
\end{theorem}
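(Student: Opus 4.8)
The plan is to reduce the statement to a Kruskal--Katona type inequality for graphs and then invoke the characterization, due to Frankl, F\"uredi, and Kalai, of the $f$-vectors of balanced simplicial complexes. Their theorem supplies an explicit list of numerical conditions --- the ``colored'' analog of the Kruskal--Katona inequalities, phrased through a cascade representation of the face numbers --- such that a vector is the $f$-vector of some $d$-colorable $(d-1)$-complex if and only if it satisfies those conditions, and moreover the conditions are always witnessed by an explicit ``colored-compressed'' complex. Thus it suffices to show that the $f$-vector of an arbitrary flag complex satisfies the Frankl--F\"uredi--Kalai inequalities. Note that one is \emph{not} claiming that $G(\Delta)$ itself is $d$-colorable (it need not be); rather, some other complex with the same $f$-vector is.

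Since a flag complex is the clique complex of its graph, $f_i(\Delta)$ equals the number of $(i+1)$-cliques of $G(\Delta)$, so the question becomes purely graph-theoretic: given a graph with a prescribed number of $k$-cliques, bound the number of $(k+1)$-cliques, and check that the sharp bound coincides with the one coming from the colored Kruskal--Katona operator. First I would try to identify the extremal graphs; guided by T\'uran's theorem and by the structure of extremal balanced complexes, the natural guess is that, among all graphs with a given number of $k$-cliques, those maximizing the number of $(k+1)$-cliques are built in a nested, lexicographic fashion from complete multipartite graphs, and that such a graph has the same clique vector as an explicit $d$-colored complex. The argument would then proceed by induction, removing a carefully chosen vertex $v$ and using the decomposition of the clique vector into the contribution of $G\setminus v$ and that of the neighborhood $\lk(v)$, mirroring the recursive structure of the usual Kruskal--Katona proof.

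The main obstacle --- and the reason this was a stubborn conjecture --- is exactly the phenomenon stressed in Section 3: for clique complexes there is no workable ``shifting'' or compression operation, because moving a single edge of the graph can change the higher clique numbers drastically, so the reverse-lexicographic compression proof of Kruskal--Katona collapses entirely. Overcoming this requires isolating the correct surrogate for compression in the graph setting and carrying out a delicate exchange and extremality analysis to pin down the structure of the clique-maximizing graphs; this is the technical core of Frohmader's argument and where the bulk of the work lies. Once the sharp clique-counting inequality is established and the extremal graphs are matched with explicit balanced complexes, verifying that a general flag $f$-vector satisfies the Frankl--F\"uredi--Kalai conditions --- hence is realized by a balanced complex --- should be comparatively routine.
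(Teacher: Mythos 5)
The paper does not prove this statement; it records it as Frohmader's theorem \cite{Frohmader-Kalai's conjecture} (resolving a conjecture of Kalai and Eckhoff) and immediately draws the corollary that flag $f$-vectors satisfy the Frankl--F\"uredi--Kalai inequalities. So there is no in-paper argument to compare against, and your proposal has to stand on its own.

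Your framing of the reduction is correct: by the Frankl--F\"uredi--Kalai theorem, a vector is the $f$-vector of some balanced complex if and only if it satisfies the colored Kruskal--Katona inequalities (witnessed by a colored-compressed complex), so the statement is equivalent to showing that the clique vector of every graph satisfies those inequalities. You also correctly identify why the usual rev-lex compression argument breaks down for clique complexes. But that is where the proposal stops. Everything after that --- the claim that extremal graphs are ``built in a nested, lexicographic fashion from complete multipartite graphs,'' the induction on removal of a vertex, and especially the ``delicate exchange and extremality analysis'' --- is asserted rather than carried out, and you explicitly label it as ``the technical core of Frohmader's argument and where the bulk of the work lies.'' Since the reduction to the FFK inequalities is essentially a restatement of the theorem, the only nontrivial content would be the verification of those inequalities, and that step is left entirely as a black box. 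In particular, you do not state the lemma relating the clique vectors of $G$, $G\setminus v$, and the neighborhood $N(v)$ that would be needed for the induction to close, nor do you verify that the extremal graphs you guess actually have the clique vectors of colored-compressed complexes. As written this is a plan pointing at Frohmader's proof, not a proof.
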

As a corollary, the $f$-vector of any flag complex must satisfy the Frankl-F\"uredi-Kalai inequalities \cite{Frankl-Furedi-kalai}. However, the above theorem does not give a full characterization of the face vectors of flag complex. In fact, there exist many vectors that are $f$-vectors of balanced complexes but not of flag complexes. How to describe those non-admissible $f$-vectors remains unknown. We also remark that the constructive proof given by Frohmader doesn't solve another conjecture of Kalai, which is a stronger version of the above statement.
\begin{conjecture}\label{conj: flag CM}
	The $f$-vector of any flag Cohen-Macaulay complex is the $f$-vector of a balanced Cohen-Macaulay complex.
\end{conjecture}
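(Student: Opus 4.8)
The plan is to reduce the conjecture to a numerical statement about $h$-vectors and then attack that statement by transporting Frohmader's argument into the Cohen-Macaulay setting. First, recall that the $f$-vector and the $h$-vector of a $(d-1)$-dimensional complex determine one another by the invertible linear substitution that defines $h$; since a flag Cohen-Macaulay complex $\Delta$ is pure of dimension $d-1$, and any complex with $f_{d-1}(\Delta)\neq 0$ faces has the same dimension, $f(\Delta)$ is realized by a balanced Cohen-Macaulay complex if and only if $h(\Delta)$ is the $h$-vector of a balanced Cohen-Macaulay complex of dimension $d-1$. So it suffices to prove that the $h$-vector of any flag Cohen-Macaulay complex is the $h$-vector of some balanced Cohen-Macaulay complex.

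Second, I would invoke the Bj\"orner--Frankl--Stanley characterization of the face vectors of balanced Cohen-Macaulay complexes --- a colored analog of Macaulay's theorem --- which, after translating from $f$ to $h$, asserts that $(h_0,\dots,h_d)$ occurs as such an $h$-vector exactly when it satisfies a system of colored Macaulay inequalities: each $h_{i+1}$ is bounded by a color-compressed shadow operator applied to $h_i$, the $h$-vector analog of the Frankl-F\"uredi-Kalai inequalities. The task then becomes showing that the $h$-vector of a flag Cohen-Macaulay complex satisfies these colored inequalities. Passing to the Artinian reduction $A=\field[\Delta]/(\Theta)$ by a generic linear system of parameters, whose Hilbert function is exactly $h(\Delta)$, one would exploit flagness, i.e. the quadratic nature of $I_\Delta$, to produce inside $A$ --- or inside a colored deformation such as a colored algebraic shift in the sense of Babson--Novik --- a presentation by a colored squarefree monomial ideal whose Hilbert function is forced to obey the colored Macaulay bound. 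This is the algebraic analog of Frohmader's combinatorial construction for the flag $f$-vector, except that one must now control the entire Artinian algebra rather than only the portion that Frohmader's construction governs.

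Third, once the inequalities are in hand one constructs the required balanced Cohen-Macaulay complex: take the color-compressed complex attaining the colored Macaulay bound --- these are expected to be shellable, hence Cohen-Macaulay --- and thin it down by a balanced analog of the Billera--Lee or Kalai sphere and ball constructions so that the $h$-vector drops to exactly $h(\Delta)$ while balancedness and shellability are preserved. The main obstacle, I expect, is precisely the gap that prevents Frohmader's theorem from settling the conjecture outright: his construction matches the $f$-vector of a flag complex by a balanced complex but offers no control over depth, shellability, or the socle, while conversely it is not clear that the colored Macaulay inequalities of Bj\"orner--Frankl--Stanley follow from anything one can currently prove about the Stanley--Reisner ring of a flag Cohen-Macaulay complex beyond the fact that $I_\Delta$ is quadratic. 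Closing this gap --- either by finding a Cohen-Macaulay-preserving refinement of Frohmader's construction, or by isolating the extra $h$-vector inequalities forced by flagness together with Cohen-Macaulayness and matching them to the balanced characterization --- is the crux.
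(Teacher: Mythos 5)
This statement is an open conjecture, not a theorem proved in the paper; the paper explicitly notes that Frohmader's constructive proof of the weaker flag $f$-vector result \emph{does not} settle it, and then records a further conjecture of Constantinescu--Varbaro that would imply it. So there is no paper proof to compare against, and you should not present a ``proof'' of it; what you have written is a research program with honestly flagged gaps, and those gaps are exactly where the difficulty lives.

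A few remarks on the program itself. The reduction from $f$-vectors to $h$-vectors in your first step is sound once the dimension is pinned down (which it is, since $f_{d-1}\neq 0$ forces dimension $d-1$). Invoking the Bj\"orner--Frankl--Stanley colored-Macaulay characterization of $h$-vectors of balanced Cohen--Macaulay complexes is the natural target, and it is consistent with the route the paper gestures at: the Constantinescu--Varbaro conjecture that $h$-vectors of flag CM complexes coincide with $f$-vectors of flag complexes, which combined with Frohmader's theorem and the balanced CM characterization would yield the statement. But your second step --- producing, from the quadratic ideal $I_\Delta$ and a generic Artinian reduction, a colored squarefree initial structure whose Hilbert function obeys the colored Macaulay bound --- is precisely the open part: it is not known that flagness together with Cohen--Macaulayness forces these inequalities, and no colored analog of algebraic shifting is known to preserve both. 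Your third step (thinning a compressed balanced shellable complex down to the target $h$-vector) is also not automatic; balanced shellings do not come with the same flexibility as ordinary Billera--Lee constructions, and ``these are expected to be shellable'' would need to be proved. So the proposal correctly locates the crux but does not close it, and the conjecture remains open.
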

We end this section with a conjecture \cite{Constantinescu-Varbaro} that implies Conjecture \ref{conj: flag CM}. Some partial results were obtained in \cite{CCV, Cook-Nagal-flag CM}.
\begin{conjecture}
	The set of $h$-vectors of flag Cohen-Macaulay simplicial complexes equals the set of $f$-vectors of flag simplicial complexes.
\end{conjecture}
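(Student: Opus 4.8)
This conjecture is an equality of two sets of finite integer sequences, so I would prove the two inclusions separately; they are of rather different natures, and only one of them looks tractable. \textbf{The inclusion ``$f$-vectors of flag complexes $\subseteq$ $h$-vectors of flag Cohen-Macaulay complexes''} I would settle by polarization. Given a flag complex $\Gamma$ on vertex set $[n]$, consider in $R=\field[x_1,\dots,x_n]$ the monomial ideal $J=I_\Gamma+(x_1^2,\dots,x_n^2)$; it is generated in degree $2$ (using flagness of $\Gamma$), and $R/J$ is Artinian with $\dim_\field (R/J)_i=f_{i-1}(\Gamma)$, because its standard monomials are exactly the squarefree monomials supported on faces of $\Gamma$. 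Polarizing $J$ --- splitting each square $x_i^2$ as $x_ix_i'$ for a new variable $x_i'$ --- gives the squarefree quadratic monomial ideal
\[
J^{\mathrm{pol}}=\big(x_ix_j:\{i,j\}\notin\Gamma\big)+\big(x_ix_i':i\in[n]\big)
\]
in $2n$ variables. All its minimal generators are squarefree of degree $2$, so $J^{\mathrm{pol}}=I_\Delta$ for a \emph{flag} complex $\Delta$ on $2n$ vertices. Polarization preserves Cohen-Macaulayness and $R/J$ is Artinian, hence Cohen-Macaulay, so $\Delta$ is Cohen-Macaulay; moreover the linear forms $x_i-x_i'$ $(i\in[n])$ form a regular sequence on $\field[\Delta]$ of length $n=\dim\field[\Delta]$, hence a linear system of parameters, whence $h(\Delta)=\mathrm{Hilb}\big(\field[\Delta]/(x_i-x_i':i\in[n])\big)=\mathrm{Hilb}(R/J)=f(\Gamma)$. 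So $\Gamma\mapsto\Delta$ realizes this inclusion, and we get a useful reformulation: a finite sequence is the $f$-vector of a flag complex if and only if it is the Hilbert function of some Artinian quotient $R/I$ by a quadratic monomial ideal $I$ (any such $I$ necessarily contains all the squares $x_i^2$).

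\textbf{The reverse inclusion is the hard direction.} Given a flag Cohen-Macaulay complex $\Delta$, I must produce a flag complex with $f$-vector $h(\Delta)$. Reducing modulo a generic linear system of parameters $\Theta$ gives an Artinian algebra $A=\field[\Delta]/\Theta$ with $\mathrm{Hilb}(A)=h(\Delta)$, of the form $A=\field[y_1,\dots,y_m]/Q$ with $Q$ generated by quadratic forms --- the images of the quadratic monomial generators of $I_\Delta$ under the generic projection. By the reformulation above, the inclusion I want is \emph{equivalent} to the statement that $\mathrm{Hilb}(A)$ is attained by a quadratic monomial ideal, a kind of ``quadratic Macaulay theorem'' for Artinian reductions of flag Cohen-Macaulay complexes. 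The plan would be: (1) replace $Q$ by a monomial ideal with the same Hilbert function, such as a generic initial ideal; (2) arrange that this monomial ideal is generated in degree $2$; (3) conclude via the polarization dictionary just established.

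The crux --- and the reason the conjecture remains open --- is step (2). Generic initial ideals, and more to the point squarefree algebraic shifting, do not preserve being generated in degree $2$: the shift of a flag complex is a shifted complex with the same $f$- and hence $h$-vector, but is typically not flag, so the strong constraints coming from flagness --- for instance the stability of links and induced subcomplexes in Lemma~\ref{lm: face link prop} --- are destroyed, and there is no reason for the resulting monomial model to be quadratic. An unconditional proof of step (2) would essentially produce a flag Kruskal-Katona theorem, which, as stressed in the introduction, is not even conjecturally understood. So the realistic route is to impose resolution hypotheses on $\Delta$: when $I_\Delta$ is componentwise linear --- equivalently, has linear quotients, or $\Delta^\vee$ is Cohen-Macaulay --- the generic initial ideal and the graded Betti numbers are controlled by the squarefree data, in the spirit of the partial results obtained in \cite{CCV, Cook-Nagal-flag CM}; one would then try to bootstrap to general flag Cohen-Macaulay $\Delta$ by inducting on vertex deletions and links (both flag, by Lemma~\ref{lm: face link prop}), carrying an explicit quadratic monomial model of the Hilbert function through the induction.
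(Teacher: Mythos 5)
This statement is presented in the paper as an \emph{open conjecture} (attributed to Constantinescu and Varbaro), not as a theorem: the paper explicitly says only partial results are known, citing \cite{CCV, Cook-Nagal-flag CM}. There is therefore no paper proof to compare against, and any complete ``proof'' of the statement would be suspect. You correctly treat it as an open problem rather than fabricating an argument, which is the right call.

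Your polarization argument for the inclusion ``$f$-vectors of flag complexes $\subseteq$ $h$-vectors of flag Cohen-Macaulay complexes'' is correct, and it is essentially the known half of the conjecture. The details check out: $J=I_\Gamma+(x_1^2,\dots,x_n^2)$ has $\dim_\field(R/J)_i=f_{i-1}(\Gamma)$ because the standard monomials are exactly the squarefree monomials supported on cliques of $\Gamma$, i.e.\ faces (flagness is used here); the polarization $J^{\mathrm{pol}}$ is a squarefree monomial ideal generated in degree $2$, hence $I_\Delta$ for a flag $\Delta$ on $2n$ vertices; polarization preserves Cohen--Macaulayness, and $R/J$ is Artinian, hence $\Delta$ is Cohen--Macaulay; and since the differences $x_i-x_i'$ form a regular sequence on $\field[\Delta]$ with quotient $R/J$, they are a linear system of parameters of length $n=\dim\field[\Delta]$, so $h(\Delta)=\mathrm{Hilb}(R/J)=f(\Gamma)$. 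Your observation that this reformulates the conjecture as a ``quadratic Macaulay theorem'' for Artinian reductions is a clean way to see where the difficulty lies, and your diagnosis of why generic initial ideals and shifting fail to preserve quadratic generation identifies the genuine obstruction. This is exactly the status of the problem: one inclusion is accessible, the reverse inclusion is the open core, and no unconditional route to it is known beyond the special cases in the cited references.
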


\section{The flag lower bound conjecture}
In this section we will discuss various lower-bound type conjectures. Among them, the Charney-Davis conjecture comes first in the history. The geometric intuition behind this conjecture is from a long-standing conjecture in differential geometry.
\begin{conjecture}\label{conj: Hopf conjecture}
	If $M^{2n}$ is a closed Riemannian manifold of nonpositive sectional curvature, then $(-1)^{2n} \chi(M^{2n})\geq 0$.
\end{conjecture}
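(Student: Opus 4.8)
The statement is the classical Hopf conjecture (here with $(-1)^n\chi(M^{2n})\ge 0$, since $(-1)^{2n}=1$), which remains open in full generality; what I can offer is a plan describing the two principal lines of attack and where each one stalls. The plan is to begin from the Chern--Gauss--Bonnet theorem, which writes
\[
\chi(M^{2n}) = \int_{M^{2n}} \mathrm{Pf}(\Omega/2\pi),
\]
where $\Omega$ is the curvature $2$-form of the Levi--Civita connection and $\mathrm{Pf}$ denotes the Pfaffian. The naive hope is that nonpositivity of the sectional curvature forces $(-1)^n\mathrm{Pf}(\Omega)$ to be pointwise nonnegative, so that the global inequality follows by integration. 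First I would verify this algebraic claim for $2n=2$ (trivial) and $2n=4$, where it reduces to a positive-semidefiniteness statement about the curvature operator checked by Milnor and Chern; this already settles the conjecture through dimension four.

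The main obstacle is that the pointwise statement is \emph{false} once $2n\ge 6$: by Geroch-type examples (and their nonpositively curved analogues) there exist algebraic curvature tensors obeying all the sectional-curvature sign constraints for which $(-1)^n\mathrm{Pf}$ changes sign. Hence any higher-dimensional proof must exploit genuinely global input -- the Bianchi identities, the fact that $\Omega$ is the curvature of a bona fide metric, or the large-scale CAT$(0)$ geometry of the universal cover -- rather than a pointwise bound. The route most relevant to the present survey is Gromov's combinatorial reformulation: via hyperbolization one attaches to a closed aspherical manifold a nonpositively curved piecewise-Euclidean cubical structure, whose links are flag simplicial spheres, and the Euler characteristic is then a sum of local contributions of the vertex links, each a flag homology $(2n-1)$-sphere. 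The sign of each local term is precisely the content of the Charney--Davis conjecture, which by Gal's work is implied by nonnegativity of the $\gamma$-vector of flag spheres.

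Thus the step I expect to be decisive is establishing $\gamma_i\ge 0$ (equivalently the Charney--Davis sign condition) for all flag homology $(2n-1)$-spheres; this is known in the cases recorded in the introduction -- dimension $\le 3$ and barycentric subdivisions of spheres -- but open in general, and its full resolution would yield the Hopf conjecture for every closed nonpositively curved piecewise-Euclidean manifold produced by hyperbolization. Passing from there to an arbitrary smooth nonpositively curved $M^{2n}$ would still require an approximation argument comparing the Riemannian Gauss--Bonnet integrand with its combinatorial surrogate, and I regard that comparison -- together with the $\gamma$-nonnegativity itself -- as the genuinely hard part.
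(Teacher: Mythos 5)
This statement is labeled as a conjecture in the paper, and indeed it is the classical Hopf conjecture, which is open; the paper offers no proof and only uses it as motivation for the Charney--Davis conjecture via Gromov's observation that a cubical complex is nonpositively curved exactly when its vertex links are flag. Your proposal correctly recognizes this: you do not claim a proof, you note that the exponent $(-1)^{2n}$ in the statement should be read as $(-1)^{n}$ (the paper's formula is evidently a typo, since $(-1)^{2n}\chi\geq 0$ would be vacuous only in the sense of asserting $\chi\geq 0$), and your outline --- Chern--Gauss--Bonnet settling dimensions $2$ and $4$, the failure of the pointwise Pfaffian sign in dimension $\geq 6$ by Geroch-type examples, and the combinatorial reformulation through flag links, Charney--Davis, and Gal's $\gamma$-nonnegativity --- is consistent with the paper's own discussion in Section 4. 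So there is no discrepancy to flag between your route and the paper's: neither contains a proof, because none exists.

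One caveat worth making explicit: even a full resolution of Gal's conjecture (hence of Charney--Davis) would settle only the piecewise-Euclidean, cubical version of the Hopf conjecture, i.e.\ the case where $M^{2n}$ carries a nonpositively curved cubical structure. Your final step --- ``an approximation argument comparing the Riemannian Gauss--Bonnet integrand with its combinatorial surrogate'' --- is not merely the hard part; no such comparison is currently known or even precisely formulated, and the smooth and PL conjectures are generally regarded as parallel rather than reducible to one another. As long as you present this as a program with that gap acknowledged (as you largely do), your account is accurate; it should not be read as suggesting that $\gamma$-nonnegativity plus a routine limiting argument would yield the smooth Riemannian statement.
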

To find a discrete version of the above conjecture, note that a simplicial complex $\Delta$ can be turned into a geodesic space by letting all the edges have length $\pi/2$. Gromov \cite{Gromov} showed that in this setting a cubical complex is non-positively curved if and only if the links are flag. (Gromov's interpretation of flagness is a very useful tool in studying flag simplicial complexes. For example, it is shown in \cite{Adiprasito-Benedetti} that the Hirsch conjecture holds for all flag homology manifolds by using the fact that every vertex star in the flag complex is geodesically convex.) Together with the fact that the Euler characteristic can be interpretted as $\chi(\Delta)= 1+ \sum_i (-1/2)^{i+1} f_i(\Delta)$, Conjecture \ref{conj: Hopf conjecture} is rephrased as the Charney-Davis conjecture in \cite{CharneyDavis-CD conjecture}:
\begin{conjecture}\label{conj: Charney-Davis conjecture}
	Let $\Delta$ be a flag simplicial $(2m-1)$-sphere (or more generally, a flag homology $(2m-1)$-sphere). Then $(-1)^m h_\Delta(-1)\geq 0$.
\end{conjecture}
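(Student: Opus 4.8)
The plan is not to bound $(-1)^m h_\Delta(-1)$ head-on but to pass through its two standard liftings. First, for a flag $(2m-1)$-sphere the $h$-polynomial is palindromic of degree $2m$, so it can be written as $h_\Delta(t)=\sum_{i=0}^{m}\gamma_i(\Delta)\,t^i(1+t)^{2m-2i}$; evaluating at $t=-1$ annihilates every summand except $i=m$ and yields $(-1)^m h_\Delta(-1)=\gamma_m(\Delta)$. Thus Conjecture~\ref{conj: Charney-Davis conjecture} is exactly the assertion $\gamma_m(\Delta)\ge 0$, and it would follow from Gal's conjecture that $\gamma_i(\Delta)\ge 0$ for all $i$. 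So the first step is this reduction, and the remaining job is to exhibit a quantity that both equals $\gamma_m(\Delta)$ and is visibly nonnegative.

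\emph{Route (a): the geometric--analytic route.} Using Gromov's criterion (all edges of length $\pi/2$ gives a non-positively curved cube complex precisely when the links are flag), realize $\Delta$ as the link of every vertex in the Davis complex $\Sigma$ of the right-angled Coxeter group $W_\Delta$; then $\Sigma$ is $2m$-dimensional and $\mathrm{CAT}(0)$, carries a proper cocompact $W_\Delta$-action, and when $\Delta$ is a sphere the reflection-group trick promotes the quotient to a closed aspherical $2m$-manifold. The combinatorial Gauss--Bonnet formula for the all-right piecewise-spherical metric localizes the (orbifold) Euler characteristic to a sum over vertices of $\chi(\Delta)=2^{-2m}(-1)^m\gamma_m(\Delta)$. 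Combining Atiyah's $L^2$-index theorem, which writes this Euler characteristic as $\sum_j(-1)^j b_j^{(2)}(\Sigma)$, with the Singer/Hopf prediction (Conjecture~\ref{conj: Hopf conjecture}) that only the middle $L^2$-Betti number survives, one gets $(-1)^m\chi\ge 0$ and hence $\gamma_m(\Delta)\ge 0$. The steps are: (i) build $W_\Delta$, $\Sigma$, and the cocompact aspherical model; (ii) prove the $L^2$-vanishing $b_j^{(2)}(\Sigma)=0$ for $j\ne m$; (iii) run the Gauss--Bonnet localization to extract the per-vertex sign. Step (ii) is the Singer conjecture, established (Davis--Okun) only through real dimension $4$, i.e. $m\le 2$, and that is precisely where this route stalls.

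\emph{Route (b): the Lefschetz--Hodge route.} The aim is a graded Artinian algebra (or a symmetric bilinear pairing) $A_\Delta^\bullet$ attached to $\Delta$ together with a degree-two Lefschetz element $\ell$, such that a dimension count or a signature in the middle degree equals $\pm\gamma_m(\Delta)$, and such that Hodge--Riemann bilinear relations pin down the sign. Candidates: the Stanley--Reisner ring $\field[\Delta]$ modulo a generic linear system of parameters, attacked with the anisotropy/biased-pairing machinery that Adiprasito, Papadakis--Petrotou, and Karu--Xie developed for the $g$-conjecture for spheres; or a ring that remembers the right-angled structure, exploiting that $I_\Delta$ is generated in degree two and hence Koszul (cohomology of the Davis orbifold, or a moment--angle/Golod-type construction); or Karu's $cd$-index nonnegativity, which does yield $\gamma_m\ge 0$, but only for barycentric subdivisions of Gorenstein$^*$ posets, not for an arbitrary flag sphere. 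The steps would be: (i) fix $A_\Delta^\bullet$ and $\ell$; (ii) identify $\gamma_m(\Delta)$ with a dimension or a signature in $A_\Delta^\bullet$; (iii) prove the Hodge--Riemann relations in the relevant degree.

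The hard part, common to both routes and in fact the crux of the problem, is that the flag hypothesis still has no usable algebraic or analytic avatar outside the few families already understood: in route (a) it is the open Singer conjecture, and in route (b) none of the existing Lefschetz/anisotropy constructions ``see'' that all minimal non-faces are edges, so one would need a genuinely new device converting this combinatorial fact into positivity of an alternating sum. Absent such an input, a complete proof of Conjecture~\ref{conj: Charney-Davis conjecture} is essentially as hard as Gal's conjecture, for which no uniform method is known --- the dimension-$\le 3$ proof rests on $L^2$-Hodge theory and the barycentric-subdivision proof on $cd$-index nonnegativity, and neither of these extends.
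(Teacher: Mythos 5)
The statement you were asked about is a \emph{conjecture}, not a theorem, and the paper contains no proof of it: it is stated as open, with only the $m=2$ case (Davis--Okun) and the barycentric-subdivision case (Karu) settled. Your write-up correctly recognizes this. Your opening reduction is right: evaluating $h_\Delta(t)=\sum_{i=0}^m\gamma_i(\Delta)\,t^i(1+t)^{2m-2i}$ at $t=-1$ kills all but the $i=m$ term, so $(-1)^m h_\Delta(-1)=\gamma_m(\Delta)$, and the conjecture is exactly $\gamma_m(\Delta)\ge 0$. This is precisely the paper's remark that the Charney--Davis conjecture is the top-coefficient case of Gal's conjecture. Your two ``routes'' are the two partial results the paper itself cites: route (a) is the $\ell^2$-cohomology/Singer-conjecture argument of Davis--Okun, valid only for $m\le 2$, and route (b) through the $cd$-index is Karu's theorem, valid only for order complexes (barycentric subdivisions) of Gorenstein$^*$ posets, not arbitrary flag spheres. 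So your proposal does not (and cannot, given the current state of the art) supply a proof; what it does is an accurate and well-organized status report that matches the paper's own discussion. The one caveat I'd flag is that your description of route (b) conflates several distinct programs --- anisotropy/biased pairings for the $g$-conjecture, Koszulness of $\field[\Delta]$, and $cd$-index nonnegativity --- none of which is presently known to ``see'' flagness in the way needed; the paper only commits to the $cd$-index line as an established partial result, so you should be careful not to suggest the Lefschetz machinery for general spheres has been adapted to the flag setting.
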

In the case of $m=2$, the conjecture says that $f_1(\Delta)-5f_0(\Delta)+16\geq 0$, which is proved in \cite{DavisOkun-dimension 3 CD conjecture} by heavy topological machinery. In higher dimensions the conjecture remains open in general. What about even-dimensional flag spheres? Define a polynomial $\tilde{h}_\Delta(t):=\frac{1}{1+t}h_\Delta(t)$. Gal and Januszkiewicz \cite{Gal-J-CD conjecture} noticed that the Charney-Davis conjecture is equivalent to the following conjecture.
\begin{conjecture}
Let $\Delta$ be a flag simplicial $2m$-sphere (or more generally, a flag homology $2m$-sphere). Then $(-1)^m \tilde{h}_\Delta(-1)\geq 0$.
\end{conjecture}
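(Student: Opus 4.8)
The plan is to reduce the asserted inequality to the nonnegativity of a single $\gamma$-number and then to attack that by induction on dimension together with the $cd$-index. Since $\Delta$ is a homology $2m$-sphere, put $d=2m+1$; the Dehn--Sommerville relations $h_i(\Delta)=h_{d-i}(\Delta)$ give $h_\Delta(-1)=(-1)^d h_\Delta(-1)=-h_\Delta(-1)$, so $h_\Delta(-1)=0$ and $\tilde h_\Delta(t)=h_\Delta(t)/(1+t)$ is a polynomial of degree $2m$. Writing Gal's $\gamma$-expansion $h_\Delta(t)=\sum_{i=0}^{m}\gamma_i(\Delta)\,t^i(1+t)^{2m+1-2i}$, dividing by $(1+t)$ and evaluating at $t=-1$ annihilates every term with $i<m$ and leaves $\tilde h_\Delta(-1)=(-1)^m\gamma_m(\Delta)$. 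Hence $(-1)^m\tilde h_\Delta(-1)\ge 0$ is exactly the nonnegativity of the top $\gamma$-number $\gamma_m(\Delta)$, the most elusive case of Gal's conjecture.

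The next ingredient is an averaging identity over vertex links. A standard face-counting computation gives $\sum_{v\in V(\Delta)}h_{\lk(v)}(t)=(2m+1)\,h_\Delta(t)+(1-t)\,h'_\Delta(t)$; evaluating at $t=-1$ and using $h_\Delta(-1)=0$ together with $h'_\Delta(-1)=\tilde h_\Delta(-1)$ (differentiate $h_\Delta=(1+t)\tilde h_\Delta$) yields $\sum_v h_{\lk(v)}(-1)=2\,\tilde h_\Delta(-1)$. Each link $\lk(v)$ is itself a flag homology $(2m-1)$-sphere, so the same $\gamma$-expansion argument gives $h_{\lk(v)}(-1)=(-1)^m\gamma_m(\lk(v))$, and therefore
\[
\gamma_m(\Delta)=\frac{1}{2}\sum_{v\in V(\Delta)}\gamma_m\big(\lk(v)\big).
\]
Thus $\gamma_m(\Delta)\ge 0$ follows as soon as $\gamma_m(\lk(v))\ge 0$ for every $v$, i.e. from the odd-dimensional Charney--Davis conjecture (Conjecture~\ref{conj: Charney-Davis conjecture}) applied to the vertex links; this is one direction of the Gal--Januszkiewicz equivalence recalled above. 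In particular the statement is unconditional for $m=0$ (then $\Delta=\mathbb S^0$ and $\tilde h_\Delta\equiv 1$); for $m=1$, where it reduces to $f_0(\Delta)\ge 6$, a special case of Proposition~\ref{lm: lower bound f-numbers} (each link is a flag cycle $C_k$ with $k\ge 4$ and $\gamma_1(C_k)=k-4\ge 0$); and for $m=2$, where the links are flag $3$-spheres and $\gamma_2(\lk(v))\ge 0$ is the Davis--Okun theorem \cite{DavisOkun-dimension 3 CD conjecture}.

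For general $m$ it remains to prove $\gamma_m(\Gamma)\ge 0$ for a flag homology $(2m-1)$-sphere $\Gamma$. Here I would pursue a $cd$-index approach: express $(-1)^m h_\Gamma(-1)$ in terms of nonnegative $cd$-index data, in the spirit of Karu's treatment of barycentric subdivisions \cite{Karu-cd-index}, and then invoke nonnegativity of the relevant coefficients. An alternative is to deduce $\gamma_m(\Gamma)\ge 0$ from an ``anti-invariant Hard Lefschetz'' property for $\field[\Gamma]/\Theta$ with respect to a natural involution, the algebraic mechanism widely expected to underlie Gal's conjecture \cite{Gal-real root conjecture} in full.

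The hard part — and the reason the conjecture is open beyond dimension $3$ and beyond barycentric subdivisions — is that flagness is invisible to these tools: the $cd$-index is nonnegative for every Gorenstein$^*$ poset, and Hard Lefschetz holds for every homology sphere, so neither by itself forces the inequality. One must extract the extra positivity carried by the hypothesis ``every minimal non-face has cardinality two,'' for instance through Gromov's non-positively-curved (CAT(0)) geometry of the cube complex dual to $\Gamma$ — the ingredient behind the dimension-$3$ case — or through rigidity and stress-space arguments extending Lemma~\ref{lm: k-rigidity of complex minus a pt} and Proposition~\ref{lm: lower bound g2}. I know of no general way to do this, so without a genuinely new idea I would expect the method to reach only these special classes.
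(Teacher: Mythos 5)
The statement you were asked to prove is not a theorem of the paper but an open conjecture: it is the even-dimensional Charney--Davis conjecture, which the paper records (following Gal and Januszkiewicz \cite{Gal-J-CD conjecture}) only as being \emph{equivalent} to the odd-dimensional Charney--Davis conjecture (Conjecture~\ref{conj: Charney-Davis conjecture}); the paper contains no proof, and none is known. Your computations up to the reduction are correct: since $d=2m+1$ is odd, Dehn--Sommerville gives $h_\Delta(-1)=0$, the $\gamma$-expansion yields $\tilde h_\Delta(-1)=(-1)^m\gamma_m(\Delta)$, and the link-averaging identity $\sum_{v}h_i(\lk(v))=(i+1)h_{i+1}(\Delta)+(d-i)h_i(\Delta)$ (valid for pure complexes, hence here) does give, after evaluating at $t=-1$, the pleasant formula $\gamma_m(\Delta)=\tfrac12\sum_v\gamma_m(\lk(v))$. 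This is precisely one direction of the Gal--Januszkiewicz equivalence, and your low-dimensional checks ($m=1$ via $\gamma_1(C_k)=k-4\ge 0$, $m=2$ via Davis--Okun \cite{DavisOkun-dimension 3 CD conjecture}) are consistent with the special cases mentioned in the paper.

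The genuine gap is that the argument stops exactly where the mathematical content begins: the remaining step, $\gamma_m(\Gamma)\ge 0$ for every flag homology $(2m-1)$-sphere $\Gamma$ (equivalently $(-1)^m h_\Gamma(-1)\ge 0$), is the open Charney--Davis conjecture itself, known only in the cases you list and for order complexes of Gorenstein$^*$ posets via Karu's $cd$-index theorem (Theorem~\ref{thm: Karu's result}). Your closing paragraph is candid about this, and your diagnosis is sound --- neither $cd$-index nonnegativity nor Lefschetz-type statements for general spheres see the flag hypothesis, so no combination of the tools you invoke closes the argument --- but as a result the proposal is a correct reduction plus a survey of obstructions, not a proof. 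Since the target is an open conjecture, this is the best one could honestly do; just be aware that what you produced reproves the known equivalence rather than establishing the statement.
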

In the next two subsections we will discuss the proof of the Charney-Davis conjecture in some special cases by introducing the $\gamma$-vector and the $cd$-index.
\subsection{The lower bound conjectures around the $\gamma$-vector}
We start by discussing the classic Generalized Lower Bound Theorem (GLBT, for short) without a requirement that a polytope or sphere is flag. It turns out that instead of the $f$-vector, GLBT is better stated in terms of the $g$-vector.
\begin{theorem}\label{thm: GLBT}
Let $P$ be a simplicial $d$-polytope. Then 
\begin{enumerate}
    \item $g_i(P)\geq 0$ for $1\leq i\leq d/2$.
    \item If $g_r(P)=0$ for some $1\leq r \leq d/2$, then $P$ is $(r-1)$-stacked, i.e., there is a triangulation $K$ of $P$ all of whose faces of dimension at most $d-r$ are faces of $P$.
\end{enumerate}
\end{theorem}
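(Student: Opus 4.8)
The plan is to pass to commutative algebra and let the hard Lefschetz theorem do the work. Write $\Delta=\partial P$, a simplicial $(d-1)$-sphere, so over an infinite field $\field$ the Stanley--Reisner ring $\field[\Delta]$ is Cohen--Macaulay of Krull dimension $d$, and in fact Gorenstein since $\Delta$ is a homology sphere. Fix a generic linear system of parameters $\Theta=(\theta_1,\dots,\theta_d)$; then $A:=\field[\Delta]/\Theta$ is a graded Artinian Gorenstein $\field$-algebra with $\dim_\field A_i=h_i(P)$ and Poincar\'e duality $A_i\cong A_{d-i}$. Under this dictionary $g_i(P)=\dim_\field A_i-\dim_\field A_{i-1}$, and, for a generic linear form $\omega$, the quotient $\bar A:=A/(\omega)=\field[\Delta]/(\Theta,\omega)$ has $\dim_\field \bar A_i=g_i(P)$ for $i\le d/2$. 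Being $(r-1)$-stacked will correspond, on the algebra side, to $\bar A$ being concentrated in degrees $\le r-1$.

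For part (1), I would invoke the hard Lefschetz property of $A$: there is a linear form $\omega\in A_1$ with $\cdot\,\omega^{\,d-2i}\colon A_i\to A_{d-i}$ an isomorphism for $0\le i\le\lfloor d/2\rfloor$. For rational $P$ this is Stanley's deduction from the hard Lefschetz theorem for the projective toric variety attached to $P$; for arbitrary simplicial $P$ it follows from McMullen's polytope algebra (and can also be obtained by Adiprasito's recent methods). Since $\cdot\,\omega^{\,d-2i+2}$ factors through $\cdot\,\omega\colon A_{i-1}\to A_i$, the latter map is injective for $1\le i\le\lfloor d/2\rfloor$, hence $h_{i-1}(P)\le h_i(P)$, i.e. $g_i(P)\ge 0$. (For the single inequality $g_2\ge 0$ one may instead use a rigidity argument of the kind appearing in Proposition~\ref{lm: lower bound g2}, but for general $i$ I know of no route avoiding a Lefschetz-type input.)

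For part (2), the implication ``$(r-1)$-stacked $\Rightarrow g_r(P)=0$'' is the classical half due to McMullen and Walkup: if $K$ is a triangulation of $P$ with every face of dimension $\le d-r$ lying on $\partial P$, then comparing the $h$-vectors of the $d$-ball $K$ and of $\Delta=\partial P$---for instance through Stanley's formula expressing $h(K)$ in terms of the interior faces of $K$, or the relative Dehn--Sommerville relations---forces $g_r(\Delta)=0$; since the $g$-vector is an $M$-vector, in fact $g_i(P)=0$ for all $i\ge r$. The converse, ``$g_r(P)=0\Rightarrow P$ is $(r-1)$-stacked,'' is the theorem of Murai and Nevo. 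Here $g_r(P)=0$ upgrades the injection $\cdot\,\omega\colon A_{r-1}\to A_r$ to an isomorphism, and hard Lefschetz together with Poincar\'e duality propagates this: $\cdot\,\omega\colon A_{i-1}\to A_i$ is an isomorphism for all $r\le i\le d-r+1$, equivalently $\bar A$ vanishes in degrees $\ge r$. One then extracts from this algebraic rigidity an actual simplicial $d$-ball $B$ with $\partial B=\partial P$ whose interior faces all have dimension $\ge d-r+1$, built inductively and using both the polytopality of $P$ (to realize $B$ as a genuine geometric triangulation of $P$) and a characterization of $(r-1)$-stackedness in terms of the vanishing of suitable local/relative $h$-type invariants, in the spirit of Bagchi--Datta.

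The main obstacle is precisely this last step: converting the purely algebraic statement ``$\bar A$ is concentrated in low degrees'' into the \emph{geometric} existence of the stacked triangulation. The inequalities in part (1) are, by contrast, routine once hard Lefschetz is granted---though one should remember that hard Lefschetz itself is the deep ingredient there, elementary only in the low-degree cases accessible to rigidity theory.
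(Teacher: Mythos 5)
The paper does not actually prove Theorem~\ref{thm: GLBT}; it is stated as background, with Stanley's hard Lefschetz argument and the McMullen--Walkup and Murai--Nevo results implicitly behind it, and only the stress-space interpretation of $g_r$ (Lee) mentioned afterward. So there is no ``paper proof'' to compare against. Against the literature proof your sketch is essentially accurate: Artinian reduction $A=\field[\partial P]/\Theta$ with $\dim_\field A_i=h_i$, Poincar\'e duality from Gorenstein-ness, hard Lefschetz (Stanley via toric hard Lefschetz for rational $P$, McMullen's polytope algebra or Adiprasito in general) giving injectivity of $\cdot\omega\colon A_{i-1}\to A_i$ for $i\le\lfloor d/2\rfloor$ and hence $g_i\ge 0$; McMullen--Walkup for ``$(r-1)$-stacked $\Rightarrow g_r=0$''; and Murai--Nevo for the converse. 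Two small cautions. First, the deduction ``$g_r=0$ plus hard Lefschetz $\Rightarrow$ $\cdot\omega\colon A_{i-1}\to A_i$ is an isomorphism for all $r\le i\le d-r+1$'' is not automatic from $g_r=0$ alone; it uses that the $g$-vector is an $M$-vector (so $g_r=0$ forces $g_i=0$ for $i\ge r$) or, equivalently, the strong Lefschetz property of $A$, and you should say which. Second, your final paragraph is candid that the passage from ``$\bar A$ concentrated in degrees $\le r-1$'' to the actual geometric $(r-1)$-stacked triangulation is the genuinely hard step; that is exactly the content of Murai--Nevo (via generic initial ideals and the squarefree operation, not the Bagchi--Datta local $h$-vector route you gesture at, though those are related), so as written your proposal is a correct high-level outline but leaves the crux as a black box.
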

In fact we even know what the $g$-numbers count: as discovered by Lee \cite{Lee-stress space}, each $g_r$ is the dimension of the affine stress spaces associated with the polytope.

Since $(i-1)$-stacked $d$-polytopes are not flag, the lower bounds given by Theorem \ref{thm: GLBT} are not tight for flag polytopes or spheres. One would expect that there is an analogous vector for flag spheres that plays the same role as the $g$-vector for general spheres. By the Dehn-Sommerville relations, the $h$-vector of any flag homology $(d-1)$-sphere $\Delta$ is symmetric. Hence the $h$-polynomial of $\Delta$ has an integer expansion in the basis $\{t^i(1+t)^{d-2i}: 0\leq i\leq d/2\}$, which shows that the following vector is well-defined.
\begin{definition}\label{df: gamme vector}
The \emph{$\gamma$-vector} of a flag homology $(d-1)$-sphere $\Delta$ is the vector $\gamma(\Delta)=(\gamma_0,\dots, \gamma_{\left\lfloor\frac{d}{2}\right\rfloor})$, whose entries are given by
\[h_\Delta(t)=\sum_{0\leq i\leq d/2}\gamma_i(\Delta)t^i(1+t)^{d-2i}.\]
The polynomial $\gamma_\Delta (t)=\sum_{0\leq i\leq d/2} \gamma_i(\Delta) t^i$ is called the \emph{$\gamma$-polynomial} of $\Delta$. Alternatively, the $\gamma$-polynomial can be defined as a polynomial of degree at most $\left\lfloor\frac{d}{2}\right\rfloor$ such that
\[h_\Delta(t)=(1+t)^d\gamma_\Delta\left(\frac{t}{(1+t)^2}\right).\]
\end{definition}
By Lemma \ref{lm: lower bound f-numbers}, $\gamma_1=f_0-2d\geq 0$. Also $\gamma_2=f_1-(2d-3)f_0+2d(d-2)$ is known to be nonnegative for $d=3$ by the Davis-Okun theorem \cite{DavisOkun-dimension 3 CD conjecture}. Gal proposed the following conjecture \cite[Conjecture 2.1.7]{Gal-real root conjecture}. 
\begin{conjecture}\label{conj: Gal's conjecture}
If $\Delta$ is a flag homology sphere then $\gamma(\Delta)$ is nonnegative.
\end{conjecture}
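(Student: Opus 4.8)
The plan is to prove Conjecture~\ref{conj: Gal's conjecture} by a double induction --- an outer induction on the dimension $d-1$ and an inner induction on the number of vertices $f_0$ --- using stellar edge subdivision as the basic move. The base of the outer induction is $d\le 4$: for $d\le 3$ only $\gamma_0=1$ and $\gamma_1=f_0-2d$ occur, and $\gamma_1\ge 0$ is Proposition~\ref{lm: lower bound f-numbers}; the case $d=4$ adds $\gamma_2$, whose nonnegativity is the Davis--Okun theorem. For the inner induction (with $d\ge 5$ fixed) the base case is the octahedral $(d-1)$-sphere, which by Proposition~\ref{lm: lower bound f-numbers} is the unique flag homology $(d-1)$-sphere with $f_0=2d$; there $h_\Delta(t)=(1+t)^d$, so $\gamma(\Delta)=(1,0,\dots,0)$, which is nonnegative.

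The engine of the inductive step is the behaviour of the $\gamma$-vector under stellar edge subdivision. If $\Gamma$ is a flag homology $(d-1)$-sphere and $e$ is an edge of $\Gamma$, then a standard computation of $f$-vectors gives $h_{\sd(e,\Gamma)}(t)=h_\Gamma(t)+t\,h_{\lk(e,\Gamma)}(t)$. Since $\lk(e,\Gamma)$ is again a flag homology sphere, of dimension $d-3$, by Lemma~\ref{lm: face link prop}, expanding both sides in the basis $\{t^i(1+t)^{d-2i}\}$ yields
\[
\gamma_k(\sd(e,\Gamma))=\gamma_k(\Gamma)+\gamma_{k-1}(\lk(e,\Gamma))\qquad\text{for all }k.
\]
Thus, if every flag homology $(d-1)$-sphere $\Delta$ other than the octahedral one could be written as $\sd(e,\Gamma)$ for some flag homology $(d-1)$-sphere $\Gamma$ with $f_0(\Gamma)=f_0(\Delta)-1$, we would be done: $\gamma(\Gamma)\ge 0$ by the inner induction and $\gamma(\lk(e,\Gamma))\ge 0$ by the outer induction, so $\gamma(\Delta)\ge 0$. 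Concretely, one would look for an edge $\{u,w\}$ of $\Delta$ lying in no induced $4$-cycle whose contraction is a flag homology sphere exhibiting $\Delta$ as its stellar edge subdivision; note that the link condition needed for the contraction to preserve the homeomorphism type is automatic in flag complexes by Lemma~\ref{lm: face link prop}.

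The hard part --- and, I expect, the real content of the conjecture --- is exactly this reducibility statement: that the octahedral $(d-1)$-sphere is the only flag homology $(d-1)$-sphere which is not a stellar edge subdivision of a smaller flag homology sphere. This is plausibly false in full generality: there may be flag (homology) spheres in which every edge lies in an induced $4$-cycle, or in which contractible edges exist but none arises from a subdivision, and such ``irreducible'' spheres would have to be handled by a different mechanism. When $\Delta$ does reduce, step by step, to the octahedral sphere through stellar edge subdivisions, the displayed identity shows $\gamma(\Delta)\ge\gamma(\text{octahedral sphere})=(1,0,\dots,0)$ coordinatewise; joins of cycles and, more generally, barycentric subdivisions are of this reducible type, which is consistent with the conjecture being known there.

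It is worth recording two alternative routes, neither of which is currently within reach in general. The first, following Karu's proof for barycentric subdivisions, would realize $\gamma_\Delta$ as a nonnegative combination of $cd$-index coefficients of a Gorenstein$^{*}$ poset attached to $\Delta$; the obstacle is that no such poset is known for an arbitrary flag sphere. The second, strictly stronger, target is the Nevo--Petersen conjecture that $\gamma(\Delta)$ is itself the $f$-vector of a flag (or at least balanced) simplicial complex: producing such a complex --- in the spirit of Frohmader's proof of the Frankl--F\"uredi--Kalai inequalities --- would give nonnegativity at once, but no construction is presently available, and a direct continuity/shifting argument is blocked by the fact that the $f$-vector of a flag complex cannot be varied continuously.
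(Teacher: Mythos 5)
Gal's conjecture (Conjecture~\ref{conj: Gal's conjecture}) is stated in the paper precisely as a \emph{conjecture}: it is open in general, and the paper gives no proof, only a discussion of the special cases in which it is known (dimension $\le 4$ via Davis--Okun, order complexes of Gorenstein$^*$ posets via Karu's $cd$-index theorem, Coxeter complexes, etc.). So there is no ``paper's proof'' to compare against; what you have written cannot be a correct proof of the statement, and indeed you say so yourself.

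Your recursion is correct, and it is worth having on the record. Starting from the $f$-polynomial identity $F_{\sd(e,\Gamma)}(t)=F_\Gamma(t)+t(1+t)F_{\lk(e,\Gamma)}(t)$, one gets $h_{\sd(e,\Gamma)}(t)=h_\Gamma(t)+t\,h_{\lk(e,\Gamma)}(t)$ and hence, since $\lk(e,\Gamma)$ is a flag homology $(d-3)$-sphere by Lemma~\ref{lm: face link prop}, the clean additive rule $\gamma_k(\sd(e,\Gamma))=\gamma_k(\Gamma)+\gamma_{k-1}(\lk(e,\Gamma))$. This shows that the $\gamma$-vector is coordinatewise monotone under stellar edge subdivision. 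That much is sound.

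The genuine gap is the ``reducibility'' claim you isolate: that every flag homology $(d-1)$-sphere other than the octahedral sphere is a stellar edge subdivision of a flag homology $(d-1)$-sphere with one fewer vertex. This is not established, and there is no reason to expect it in general. Observe that $\Delta=\sd(e,\Gamma)$ forces $\Delta$ to have a vertex $v$ (the subdivision point) whose link is $\partial e * \lk(e,\Gamma)$, i.e.\ a suspension $\mathbb{S}^0 * L$ with the two suspension points nonadjacent in $\Delta$; a generic flag sphere has no such vertex. Finding a contractible edge (one in no induced $4$-cycle, so that flagness is preserved) is also not guaranteed. The Lutz--Nevo stellar theory result quoted in Section~2 only says that any two PL-homeomorphic flag complexes are connected by a zig-zag of flag edge subdivisions and contractions; it does not give the monotone descent your induction needs. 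So what you actually have is a reduction of Gal's conjecture to a strictly stronger structural conjecture about flag spheres, itself open (and, as you note, ``plausibly false in full generality''), plus the observation that nonnegativity holds along subdivision-reducible chains. That is a reasonable heuristic, but it is not a proof, and it does not recover the known cases (Karu's argument for barycentric subdivisions does not proceed by exhibiting them as iterated edge subdivisions from the cross-polytope).
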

In particular, Charney-Davis conjecture states that $\gamma_{\left\lfloor d/2\right\rfloor}\geq 0$ and is a special case of Gal's conjecture. Conjecture \ref{conj: Gal's conjecture} is known to hold for several important classes of flag spheres, including the order complexes of Gorenstein$^*$ complexes and Coxeter complexes. In an attempt of finding a flag analog of stackness, Nevo and Lutz proposed the following conjecture \cite[Conjecture 6.1]{Nevo-Lutz}, which is still open.
\begin{conjecture}\label{conj: gamma_2=0}
	Let $d\geq 4$ be an integer and $\Delta$ be a flag simplicial $(d-1)$-sphere.
	Then the following are equivalent:
	\begin{enumerate}
		\item $\gamma_2(\Delta)=0$.
		\item There is a sequence of edge contractions from $\Delta$ to the boundary of the $d$-cross-polytope such that all complexes in the sequence are flag spheres, and the link of each edge
		contracted is the octahedral $(d-3)$-sphere.
	\end{enumerate}
\end{conjecture}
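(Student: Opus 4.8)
The two implications have quite different characters: $(2)\Rightarrow(1)$ is a short bookkeeping argument, while $(1)\Rightarrow(2)$ is the hard direction and is in fact the open part of the conjecture. For $(2)\Rightarrow(1)$ I would first compute the effect of an edge contraction on $\gamma_2$. Suppose $\Delta$ is a flag $(d-1)$-sphere and $e=\{u,v\}\in\Delta$ is an edge for which $\Delta'=\sd^{-1}(e,\Delta)$ is again a flag $(d-1)$-sphere. A direct count of vertices and edges gives $f_0(\Delta')=f_0(\Delta)-1$ and $f_1(\Delta')=f_1(\Delta)-1-f_0(\lk_\Delta e)$, so, using $\gamma_2=f_1-(2d-3)f_0+2d(d-2)$,
\[
\gamma_2(\Delta')=\gamma_2(\Delta)-\bigl(f_0(\lk_\Delta e)-2(d-2)\bigr).
\]
Since $\lk_\Delta(e)$ is a flag homology $(d-3)$-sphere (Lemma~\ref{lm: face link prop}), Proposition~\ref{lm: lower bound f-numbers} gives $f_0(\lk_\Delta e)\ge 2(d-2)$, with equality exactly when $\lk_\Delta(e)$ is the octahedral $(d-3)$-sphere; hence $\gamma_2$ never increases under such a contraction and is unchanged precisely when the contracted edge has octahedral link. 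Running this along the given sequence from $\Delta$ to $\partial\lozenge^{\,d}$, and noting that $\partial\lozenge^{\,d}$ has $h$-polynomial $(1+t)^d$ and hence $\gamma=(1,0,\dots,0)$, yields $\gamma_2(\Delta)=0$.

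For $(1)\Rightarrow(2)$ the plan is induction on $f_0(\Delta)$. If $f_0(\Delta)=2d$ then $\gamma_1(\Delta)=0$, so equality holds in Proposition~\ref{lm: lower bound f-numbers} and $\Delta$ is the octahedral $(d-1)$-sphere $\partial\lozenge^{\,d}$, and the empty sequence works. Assume $f_0(\Delta)>2d$. Because the link condition $\lk(u)\cap\lk(v)=\lk(e)$ of Lemma~\ref{lm: face link prop} holds in any flag complex, contracting an edge $e=\{u,v\}$ of $\Delta$ that lies in no induced $4$-cycle produces a flag simplicial $(d-1)$-sphere $\Delta'=\sd^{-1}(e,\Delta)$; by the contraction formula, if in addition $\lk_\Delta(e)$ is octahedral then $f_0(\Delta')=f_0(\Delta)-1$ and $\gamma_2(\Delta')=0$, so the induction hypothesis produces a sequence $\Delta'\to\cdots\to\partial\lozenge^{\,d}$ of the required type and prepending $\Delta\to\Delta'$ finishes the step. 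Thus the whole inductive step reduces to the following existence statement: \emph{every flag $(d-1)$-sphere $\Delta$ with $\gamma_2(\Delta)=0$ and $f_0(\Delta)>2d$ contains an edge $e$ whose link $\lk_\Delta(e)$ is the octahedral $(d-3)$-sphere and which lies in no induced $4$-cycle of $\Delta$.}

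This existence statement is the crux, and proving it is precisely what is open. I would attack it on two fronts. To produce edges with octahedral link one sums the contraction formula over all edges, which controls $\sum_e\bigl(f_0(\lk_\Delta e)-2(d-2)\bigr)=3f_2(\Delta)-2(d-2)f_1(\Delta)$; one would want $\gamma_2(\Delta)=0$ to force this sum to be small---ideally by first establishing that $\gamma_2(\Delta)=0$ forces $\gamma_3(\Delta)=0$ as well, in the spirit of the ``$g_r=0\Rightarrow g_{r+1}=0$'' part of Theorem~\ref{thm: GLBT}---so that a large fraction of the edges have octahedral link. The delicate point, which I expect to be the true obstacle, is then to single out among these an edge lying in no induced $4$-cycle: octahedrality of $\lk_\Delta(e)$ does not suffice by itself, since for $e=\{u,v\}$ the link $\lk_\Delta(v)$, being a flag $(d-2)$-sphere, has at least $2(d-1)$ vertices and hence a vertex outside $\{u\}\cup V(\lk_\Delta e)$, and symmetrically on the $u$-side, so an induced $4$-cycle through $e$ arises as soon as two such surplus vertices on opposite sides of $e$ happen to be adjacent. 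Excluding this seems to call for a local analysis of exactly what the constraint $\gamma_2(\Delta)=0$ imposes on the vertex links of $\Delta$, presumably together with the strong $(2d-2)$-connectivity of $G(\Delta)$ from Lemma~\ref{lm: graph conn prop}, and I do not see how to carry it out in general. The case $d=4$ looks the most tractable: there $\gamma_2$ is the top $\gamma$-number, $\gamma_2\ge0$ is the Davis--Okun theorem, and a direct structural study of flag $3$-spheres with $\gamma_2=0$ may be within reach.
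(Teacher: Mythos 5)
The statement you are asked to prove is explicitly labeled a \emph{conjecture} in the paper, and the paper says outright that it ``is still open.'' There is therefore no proof in the paper to compare your proposal against. What you can reasonably be judged on is whether you correctly identify what is known and what is not, and whether the parts you do argue are right.

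Your bookkeeping for the implication $(2)\Rightarrow(1)$ is correct. The vertex and edge counts under an edge contraction are as you say, the identity $\gamma_2=f_1-(2d-3)f_0+2d(d-2)$ is the same one the paper records, and combining them gives
\[
\gamma_2\bigl(\sd^{-1}(e,\Delta)\bigr)=\gamma_2(\Delta)-\bigl(f_0(\lk_\Delta e)-2(d-2)\bigr),
\]
with $f_0(\lk_\Delta e)\ge 2(d-2)$ by Proposition~\ref{lm: lower bound f-numbers} applied to the flag $(d-3)$-sphere $\lk_\Delta e$, and equality exactly when that link is octahedral. Iterating from $\Delta$ to $\partial\lozenge^d$, where $\gamma=(1,0,\dots,0)$, gives $\gamma_2(\Delta)=0$. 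You also correctly invoke the link condition from Lemma~\ref{lm: face link prop} to see that a contraction along an edge in no induced $4$-cycle yields a flag PL $(d-1)$-sphere. (One small caveat you could flag: this PL argument is clean when $\Delta$ is PL, which is automatic for $d\le 4$ but not a priori for larger $d$.)

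You are also correct, and candid, that the converse $(1)\Rightarrow(2)$ is the substance of the conjecture and remains open; the reduction of the inductive step to finding a single edge with octahedral link lying in no induced $4$-cycle is the natural framing, and your discussion of why such an edge is hard to locate (octahedral link does not preclude an induced $4$-cycle through the edge) is a genuine identification of the obstacle, not a gap in an otherwise complete argument. In short, your assessment matches the paper's: the easy direction is a short $\gamma_2$ calculation, and the hard direction is open. Your minor observation that this is really the $d=4$ instance of Davis--Okun is correct (the paper's ``$d=3$'' there is a typo; for $d=3$ there is no $\gamma_2$).
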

However, we don't have a conjecture of which flag spheres can attain $\gamma_i=0$ for $i\geq 3$. On the other hand, the $\gamma$-vector has an explicit combinatorial description in some cases (e.g., the $\gamma_i$ of type A, B and D Coxeter complexeres are interpretted as the number of permutations in $S_n$ that satisfies certain conditions, see \cite{Nevo-Petersen}). We propose the following problems:
\begin{problem}\label{prob: meaning of gamma numbers}
	
	\begin{enumerate}
		\item Find a characterization of flag homology $(d-1)$-spheres with $\gamma_i=0$ for $3\leq i\leq d/2$, 
		\item Find a combinatorial interpretation of the $\gamma$-numbers.
	\end{enumerate}

\end{problem} 
We end this section with a strenghening of Conjecture \ref{conj: Gal's conjecture} suggested by Nevo and Petersen \cite{Nevo-Petersen}. 
\begin{conjecture} Let $\Delta$ be a flag homology sphere.
\begin{enumerate}
	\item $\gamma(\Delta)$ satisfies the
	Kruskal-Katona inequalities.

	\item $\gamma(\Delta)$ satisfies the
	Frankl-F\"uredi-Kalai inequalities.

\end{enumerate}
\end{conjecture}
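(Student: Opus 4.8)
\smallskip\noindent\textbf{Towards a proof.} The cleanest route to both parts at once is to establish the stronger assertion, which also contains Gal's Conjecture \ref{conj: Gal's conjecture}: \emph{for every flag homology $(d-1)$-sphere $\Delta$ there exists a flag simplicial complex $\Gamma_\Delta$ with $f_{i-1}(\Gamma_\Delta)=\gamma_i(\Delta)$ for all $i$.} Granting this, part (2) is immediate from Frohmader's theorem, since the $f$-vector of the flag complex $\Gamma_\Delta$ is the $f$-vector of a balanced complex and hence satisfies the Frankl--F\"uredi--Kalai inequalities; and part (1) follows either because a flag complex is in particular a simplicial complex, or because the Frankl--F\"uredi--Kalai inequalities refine the Kruskal--Katona ones. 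So the entire problem is reduced to a construction of $\Gamma_\Delta$, and one is free to build $\Gamma_\Delta$ on an auxiliary vertex set with no geometric relation to $\Delta$.

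First I would dispatch the cases where $\gamma$ is already understood. If $\Delta$ is the boundary of the $d$-cross-polytope then $h_\Delta(t)=(1+t)^d$, so $\gamma(\Delta)=(1,0,\dots,0)$, realized by a point. Since $h$-polynomials multiply under join, so do $\gamma$-polynomials, while $f$-polynomials of flag complexes also multiply under join and joins of flag complexes are flag; hence the class of spheres admitting such a $\Gamma_\Delta$ is closed under joins, and in particular every join of cross-polytopes — and of any spheres already handled — is covered. For barycentric subdivisions of simplicial (or regular CW) spheres and for the Coxeter complexes of types $A$, $B$, $D$, the known formulas expressing $\gamma_i$ as a count of permutations with prescribed descent/excedance-type statistics should be organizable into an explicit flag complex on those permutations, settling these families directly.

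Next I would attempt an induction along flag edge subdivisions, using the Nevo--Lutz lemma (quoted above) that PL homeomorphic flag complexes are connected by such moves. For a flag homology $(d-1)$-sphere and an edge $e\in\Delta$ one has the identity $\gamma_{\sd(e,\Delta)}(x)=\gamma_\Delta(x)+x\,\gamma_{\lk(e,\Delta)}(x)$, coming from $h_{\sd(e,\Delta)}(t)=h_\Delta(t)+t\,h_{\lk(e,\Delta)}(t)$. On the complex side this asks for a flag complex with $f$-polynomial $F_{\Gamma_\Delta}(x)+x\,F_{\Gamma_{\lk(e,\Delta)}}(x)$ — morally $\Gamma_\Delta$ together with one new vertex coned over a copy of $\Gamma_{\lk(e,\Delta)}$ that is glued to $\Gamma_\Delta$ compatibly. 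Making this precise forces one to carry along, as part of the inductive hypothesis, a coherent family of realizations $\Gamma_{\lk(\sigma,\Delta)}$ for all faces $\sigma$, i.e.\ a ``local'' version of the statement in the spirit of local $\gamma$-vectors.

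The hard part is twofold. First, the inverse move — edge contraction, needed because edge subdivisions alone do not generate a PL class — is genuinely harder to track: given a flag $\Gamma_\Delta$ realizing the larger $\gamma$-vector there is no canonical flag subcomplex realizing the smaller one, so the induction does not obviously close. Second, and more seriously, flag homology spheres are nowhere near exhausted, up to PL (let alone topological) homeomorphism, by joins of cross-polytopes — in high dimensions one must even accommodate non-PL homology spheres — so one needs a mechanism that controls $\gamma$ across \emph{all} homeomorphism types, and none is known; this is exactly why plain nonnegativity (Gal's conjecture) is already open. An algebraic alternative would be to produce a standard graded $\field$-algebra whose Hilbert function is $\gamma(\Delta)$, for instance by exhibiting a generic linear form acting as a ``$(d-i)$-to-$i$'' Lefschetz-type operator on a suitable subquotient of $\field[\Delta]$: Macaulay's theorem would then give part (1) at once, and a \emph{quadratic} such algebra would give part (2) via a flag/balanced model. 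But constructing this algebra functorially, and proving it is defined by quadrics, is precisely where the difficulty concentrates — the current state of the art, e.g.\ the rigidity argument behind $g_2(\Delta)\geq\binom{d}{2}-d$ in Proposition \ref{lm: lower bound g2}, only reaches the degree $\leq 2$ part. As a sanity check, for $d\leq 5$ one has $\gamma(\Delta)=(1,\gamma_1,\gamma_2)$ and the inequalities amount to $0\le\gamma_2\le\binom{\gamma_1}{2}$ (Kruskal--Katona) and $0\le\gamma_2\le\lfloor\gamma_1/2\rfloor\lceil\gamma_1/2\rceil$ (Frankl--F\"uredi--Kalai), i.e.\ quadratic-in-$f_0$ upper bounds on $f_1$ for flag $3$- and $4$-spheres, which are within reach of the methods surveyed in Sections 4 and 5.
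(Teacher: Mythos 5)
The statement you were given is an open conjecture, not a theorem: the paper states it as Nevo and Petersen's strengthening of Gal's conjecture and then remarks only on partial results (part (1) known but not sharp for flag $3$- and $4$-spheres; part (2) confirmed for barycentric subdivisions of homology spheres by Nevo--Petersen--Tenner via Eulerian polynomials, and for barycentric subdivisions of boundaries of polytopes by Murai--Nevo via the $cd$-index). There is therefore no proof in the paper to compare against, and you correctly refrain from claiming one.

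Your outline is sound as an exploration. The reduction to the assertion that $\gamma(\Delta)$ is itself the $f$-vector of a flag complex is exactly the right move --- this is in fact the form in which Nevo and Petersen originally posed the conjecture, and you correctly observe that via Frohmader's theorem it implies the Frankl--F\"uredi--Kalai inequalities (hence, a fortiori, Kruskal--Katona, since balanced complexes are simplicial complexes). Your checks are accurate: $\gamma$-polynomials multiply under join because $h$-polynomials do and $\gamma_\Delta(x)$ is determined by $h_\Delta(t)=(1+t)^d\gamma_\Delta\bigl(t/(1+t)^2\bigr)$; the identity $\gamma_{\sd(e,\Delta)}(x)=\gamma_\Delta(x)+x\,\gamma_{\lk(e,\Delta)}(x)$ follows from $h_{\sd(e,\Delta)}(t)=h_\Delta(t)+t\,h_{\lk(e,\Delta)}(t)$ after the same change of variables; and your low-dimensional sanity check $(1,\gamma_1,\gamma_2)$ with $\gamma_2\le\binom{\gamma_1}{2}$ and $\gamma_2\le\lfloor\gamma_1/2\rfloor\lceil\gamma_1/2\rceil$ is correct. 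You also correctly identify why the induction along flag edge moves does not close: edge contraction provides no canonical flag subcomplex realizing the smaller $\gamma$-vector, and the Lutz--Nevo flag Alexander theorem only connects PL-homeomorphic flag complexes, so one gains no purchase on the full (non-PL, non-polytopal) range of flag homology spheres. This is precisely why even Gal's nonnegativity conjecture is open in general, and your discussion is consistent with the state of the art surveyed in the paper. In short: no gap to flag relative to the paper, because the paper has no proof either; what you have written is a reasonable research plan rather than a proof, and it is honest about that.
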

The first part of the conjecture is true but not sharp for flag 3- (or 4)-spheres. The second conjecture is more interesting; it is confirmed for barycentric subdivisions of simplicial homology spheres in \cite{Nevo-Petersen-Tancer}, using enumerative properties of Eulerian polynomials. Later it is also proved for barycentric subdivisions of the boundary complexes of polytopes in \cite{Murai-Nevo} via $cd$-index.

\subsection{Proof idea: the $cd$-index}
In this section, we will discuss Karu's break-through result on the nonnegativity of the $\gamma$-vector. 
Let $P$ be a finite graded poset of rank $n+1$ with $\hat{0}$ and $\hat{1}$, and the rank function is $\rho: P\to \{0,1,\dots, n+1\}$. For any $S\subseteq [n]$, the \emph{$S$-rank selected subposet} of $P$ is
\[P_S=\{x\in P: \rho(x)\in S\}\cup\{\hat{0}, \hat{1}\}.\] The function $\alpha: 2^{[n]}\to \mathbb{Z}$, where each $\alpha(S)$ counts the number of maximal chains in $P_{S}$, is called the \emph{flag $f$-vector} of $P$. Also define the \emph{flag $h$-vector} of $P$ as the function $\beta: 2^{[n]}\to \mathbb{Z}$,
\[\beta(S)=\sum_{T\subset S}(-1)^{|S|-|T|}\alpha(T).\]

The $cd$-index is a polynomial in variables $c,d$ which encodes the flag $h$-vector of $P$. We assign each $S\subseteq [n]$ a noncommutative polynomial $u_S=u_1u_2\dots u_n$ in the variables $a,b$ by
\[u_i=
\begin{cases}
a & i\notin S\\
b & i\in S
\end{cases}
.\]
Let $\psi_P(a,b)=\sum_{S\subseteq [n]} \beta(S)u_S$. In particular, when $P$ is an Eulerian poset, $\psi_P(a,b)$ can further be written as a polynomial $\phi_P(c,d)$ in $c=a+b$ and $d=ab+ba$. This is the \emph{$cd$-index} of $P$.

It is conjectured that the $cd$-index of a regular CW sphere, or more generally, a Gorenstein$^*$ poset (see \cite{Stanley-cd-index} for the definition), is nonnegative. Many earlier works around this conjecture were based on recursive formulas of the $cd$-index, see, for example, \cite{Stanley-cd-index}. We remark that it was also observed in \cite{Ehrenborg-Readdy} that the $cd$-index is a coalgebra map from the vector space spanned by isomorphism
classes of Eulerian posets to the algebra of polynomials in the non-commutative variables $c,d$, which provides another tool to derive formulas for the $cd$-index. Karu gave a full proof of the conjecture in \cite{Karu-cd-index} by algebraic geometry tools. 
\begin{theorem}\label{thm: Karu's result} 
	If $\Delta$ is a Gorenstein$^*$ poset, then $\phi_\Delta(c,d)\geq 0$, i.e., the coefficients of $\phi_\Delta(c,d)$ are nonnegative.
\end{theorem}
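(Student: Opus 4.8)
The plan is to follow Karu's strategy of \emph{categorifying} the $cd$-index: rather than manipulating the flag $f$- and $h$-vectors of $\Delta$ directly, I would attach to $\Delta$ a family of graded vector spaces --- the combinatorial intersection cohomology of all of its intervals --- whose dimensions assemble into the coefficients of $\phi_\Delta(c,d)$, and then show that each such piece is a genuine ``primitive cohomology'' space, hence of nonnegative dimension. Concretely, to the poset $\Delta$ one associates its \emph{minimal extension sheaf}, i.e.\ the combinatorial intersection cohomology sheaf $\mathcal{L} = \mathcal{L}_\Delta$: a sheaf of graded modules over an auxiliary polynomial ring $A$, built by upward induction on rank from the bottom element, in which $\mathcal{L}_x$ is the minimal free module surjecting onto the sections of $\mathcal{L}$ over the proper part of the interval $[\hat{0}, x]$. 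Reducing modulo a generic linear system of parameters yields finite-dimensional graded vector spaces $\overline{\mathcal{L}}_x$; the Poincar\'e polynomial of $\overline{\mathcal{L}}_{\hat{1}}$ is (a reindexing of) the toric $h$-polynomial of $\Delta$, and each interval carries its own such data.

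The technical heart --- and the step I expect to be \textbf{the main obstacle} --- is establishing the ``standard package'' for $\mathcal{L}$: Poincar\'e duality, the Hard Lefschetz theorem, and the Hodge--Riemann positivity of the associated bilinear form, all proven \emph{simultaneously} by induction on $\mathrm{rank}(\Delta)$. What is needed downstream is that for a generic degree-one Lefschetz element $\ell$, multiplication by powers of $\ell$ induces the expected isomorphisms on $\overline{\mathcal{L}}$, so that every primitive piece has nonnegative dimension. In the rational polytopal case this is precisely the hard Lefschetz theorem for the associated projective toric variety; but a general Gorenstein$^*$ poset has no variety attached, so one must run the purely combinatorial Bressler--Lunts / Barthel--Brasselet--Fieseler--Kaup induction: bootstrap Hard Lefschetz in rank $n$ from Poincar\'e duality and Hard Lefschetz in ranks $< n$, using the Hodge--Riemann relations on the links of $\Delta$ to control signs. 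With no underlying space to appeal to, the positivity of the intersection form must be transported by hand through the inductive step, and making this bookkeeping close is the delicate point --- it was Karu's principal contribution, building on earlier combinatorial Hard Lefschetz results.

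With the standard package in hand, I would translate the $cd$-index into this language. Exploiting the coalgebra (coproduct) structure of the $cd$-index and the recursion defining $\phi$, one matches each letter $c$ with a ``suspension'' (raise rank by one and multiply by $\ell$) and each letter $d$ with passage to the \emph{primitive part} of a rank-two step, which by Hard Lefschetz is exactly the primitive intersection cohomology of the link of the corresponding element of $\Delta$. An induction on $\mathrm{rank}(\Delta)$ then identifies the coefficient of any $cd$-monomial $w$ in $\phi_\Delta(c,d)$ with the dimension of an explicitly described iterated primitive subquotient of $\mathcal{L}$ --- a product, over a flag of elements of $\Delta$, of primitive dimensions of successive links. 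Since each such subquotient is, by the standard package, a Hard Lefschetz primitive space, its dimension is $\geq 0$; hence every coefficient of $\phi_\Delta(c,d)$ is nonnegative, which is Theorem~\ref{thm: Karu's result}. This in turn settles the survey's themes for the relevant class: the order complex of a Gorenstein$^*$ complex is again Gorenstein$^*$, and both the $\gamma$-polynomial and the Charney--Davis quantity $(-1)^m h_\Delta(-1)$ of a barycentric subdivision are nonnegative integer combinations of $cd$-index coefficients, so the theorem confirms Gal's Conjecture~\ref{conj: Gal's conjecture} for barycentric subdivisions of Gorenstein$^*$ complexes.
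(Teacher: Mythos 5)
Your outline correctly places the proof in the sheaf-theoretic world of combinatorial intersection cohomology: Karu does build a minimal extension sheaf $\mathcal{L}$ on the poset and relies on the Bressler--Lunts / BBFK machinery (Poincar\'e duality, Hard Lefschetz, Hodge--Riemann) for it. The part of your argument that does not close is the final translation from the sheaf cohomology to the $cd$-index coefficients, and it differs essentially from what the paper (and Karu) actually do.

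In the paper's sketch, the $cd$-structure does not come from a Lefschetz decomposition into primitives. Instead one works with the noncommutative Poincar\'e polynomial $P_n(\Delta)=\sum_S h_S(\Delta)\,t^S$, with $c$ and $d$ realized via the substitution $c\mapsto 1+t_i$, $d\mapsto t_i+t_{i+1}$. In the shellable case one attaches the pieces $\sigma_i^-=\sigma_i\cap(\cup_{j<i}\sigma_j)$ one at a time; \emph{relative} Poincar\'e duality for the pair $(\sigma_i^-,\partial\sigma_i^-)$ forces $P_{n-2}(\sigma_i^-,\partial\sigma_i^-)=f_i+g_i t_{n-1}$ whenever $P_{n-2}(\partial\sigma_i^-)=g_i$ is a $cd$-polynomial, and summing over the shelling gives $P_n(\Delta)=\sum_i f_i\,c+g_i\,d$. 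Nonnegativity then follows by induction on degree, since $f_i$ and $g_i$ are Poincar\'e polynomials of lower-rank Gorenstein$^*$ data. So the letters $c$ and $d$ arise from attaching maps and relative duality of shelling pieces; the Lefschetz package is what substitutes for the shelling in the general Gorenstein$^*$ case, controlling the sheaf cohomology so that the same recursion goes through.

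The concrete gap in your proposal is the claim that the coefficient of an arbitrary $cd$-monomial $w$ equals the dimension of an ``iterated primitive subquotient over a flag,'' obtained by matching each $c$ with a suspension and each $d$ with the primitive part of a link at a rank-two step. No such dictionary exists: the $cd$-coefficients are signed combinations of flag $h$-numbers, and they do not, in general, compute Lefschetz-primitive dimensions of links. Establishing nonnegativity requires the finer $(t_1,\dots,t_n)$-graded refinement and the recursive decomposition $P_n=\sum f_i c+g_i d$; without that your argument asserts, rather than proves, the positivity of each coefficient. The closing paragraph on applications (order complexes of Gorenstein$^*$ complexes, $\gamma_i(\sd\Delta)=2^i\delta_i\ge 0$, and the Charney--Davis quantity) is correct and matches the paper.
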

We sketch Karu's proof in the special case when $\Delta$ is a complete $n$-dimensional shellable fan. To each $cd$-monomial $M$ we associate a polynomial in $t_1,\dots, t_n$ by replacing $c$ with $1+t_i$ and $d$ with $t_i+t_{i+1}$; for example, $cd=(t_1+1)(t_2+t_3)$. Let $P_n(\Delta)=\sum_S h_S(\Delta) t^S$ be the Poincar\'e polynomial of $\Delta$. Let $\sigma_1,\dots, \sigma_n$ be a shelling of $\Delta$ and define $\sigma_i^-=\sigma_i\cap (\cup_{j=1}^{i-1}\sigma_j)$. The $(n-1)$-skeleton of $\Delta$ can be constructed from $\partial \sigma_n$ by attaching $\sigma_{n-1}^-, \sigma_{n-2}^-, \dots, \sigma_2^-$ in order. By Poincar\'e duality, if $P_{n-2}(\partial \sigma_i^-)=g_i(c,d)$ for some $cd$-polynomial of degree $n-2$, then $P_{n-2}(\sigma_i^-, \partial \sigma_i^-)=f_i(c,d)+g_i(c,d)t_{n-1}$ for some $cd$-polynomial $f_i$ of degree $n-1$. So we have that
\[P_n(\skel_{n-1}(\Delta))=\sum_{i=2}^n P_{n-2}(\sigma_i^-, \partial \sigma_i^-)=\sum_{i=2}^n f_i+g_it_{n-1}.\] 
From this one can show that $P_n(\Delta)=\sum_{i=2}^n f_ic+g_id$, and then the proof is done by induction on the dimension. The proof of the general case is more subtle and requires the sheaf theory, see \cite{Karu-cd-index} for the details.

The $cd$-index is closely related to the $\gamma$-vector. Let $\sd(\Delta)$ be the barycentric subdivision of a regular CW-sphere $\Delta$. Plugging in $c=1$ in the $cd$-index of the face poset of $\Delta$, we get that \[\phi_\Delta(1,d)=\delta_0+\delta_1\emph{d}+\dots+\delta_{\left\lfloor \frac{n}{2}\right\rfloor} \emph{d}^{\left\lfloor \frac{n}{2}\right\rfloor}, \]
where $\delta_i$ is the sum of coefficients of monomials in $\phi_\Delta(c, d)$ for which $d$ appears $i$ times. It follows from Theorem \ref{thm: Karu's result} that $\gamma_i(\sd(\Delta))=2^i\delta_i\geq 0$. In other words, Karu's result settles Conjecture \ref{conj: Gal's conjecture} in the class of order complexes of Gorenstein$^*$ posets.

Is there a combinatorial description of the $cd$-index? The following conjecture was first proposed by Murai and Nevo \cite{Murai-Nevo} and then rephrased by Karu \cite{Karu-M vector analog}. Define the \emph{multidegree} of a degree $n$ $cd$-monomial $M$ as a zero-one vector in $\mathbb{Z}^n$ obtained by replacing each $c$ with 0 and each $d$ with 10.
\begin{conjecture}
	Let $\field$ be a field. Let $P$ be a Gorenstein$^*$ poset of rank $n+1$. Then there exists a standard $\mathbb{Z}^n$-graded $\field$-algebra $A = \oplus_v A_v$, such that for any $v\in\mathbb{Z}^n$, the coefficient of the $cd$-monomial $M$ in $\phi_P(c,d)$
with multidegree $v$ is $\dim A_v$.
\end{conjecture}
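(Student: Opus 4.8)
We outline a plausible line of attack, patterned on the way the $g$-theorem realizes the $g$-vector of a simplicial polytope as a Hilbert function. Since the $cd$-index of $P$ is determined by the flag $h$-vector $\beta$, and $\beta$ is itself the multigraded Hilbert function of a balanced Cohen--Macaulay algebra, the first step is to pass to the order complex $\sd(P)$. When $P$ is Gorenstein$^*$, $\sd(P)$ is a balanced homology $(n-1)$-sphere, $n$-colored by the rank function $\rho$. By Stanley's theory of balanced complexes, if $\theta_1,\dots,\theta_n$ is a \emph{colored} linear system of parameters for $\field[\sd(P)]$ -- with $\theta_i$ supported on the color-$i$ vertices -- then $R:=\field[\sd(P)]/(\theta_1,\dots,\theta_n)$ is a $\mathbb{Z}^n$-graded Artinian Gorenstein $\field$-algebra whose component of multidegree $S\in\{0,1\}^n$ has dimension $\beta(S)$. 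The algebra $A$ we are after should be a canonical quotient of $R$, or of the closely related combinatorial intersection cohomology of the fan over $\sd(P)$ used in Karu's proof of Theorem~\ref{thm: Karu's result}, whose stalks already carry a descent-set refinement of the $h$-vector.

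The real content of the conjecture is to cut $R$ down from the flag $h$-vector to the $cd$-coefficients in a manner that survives the $\mathbb{Z}^n$-grading. In the ordinary $\mathbb{Z}$-grading the analogous step is $g_i=\dim(R/\ell R)_i$ for a Lefschetz element $\ell$, and one wants the $\mathbb{Z}^n$-refinement, reading off in multidegree $v$ the coefficient of the $cd$-monomial of multidegree $v$ (the identification with the Poincar\'e-polynomial dictionary $c\mapsto 1+t_i$, $d\mapsto t_i+t_{i+1}$, which turns $\phi_P$ into $\sum_S\beta(S)t^S$, is then a routine combinatorial translation). The obstruction is this: a \emph{generic} linear form $\ell$ is not homogeneous for the $\mathbb{Z}^n$-grading -- each variable $x_v$ lies in degree $e_{\rho(v)}$, so $\ell$ mixes degrees -- and $R/\ell R$ forgets the fine grading. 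One must therefore either locate a special Lefschetz-type element or sequence that respects the grading as much as the generalized Dehn--Sommerville relations allow, or build $A$ directly out of Karu's sheaf $\mathcal{L}$ on the fan over $\sd(P)$, whose local-to-global structure carries the finer grading by construction, and afterwards show that what comes out is an algebra.

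In either route the main obstacle is the ring structure rather than the numerics: one must prove that $A$ with its $\mathbb{Z}^n$-grading is \emph{standard} -- generated by $A_{e_1},\dots,A_{e_n}$ with $A_u\cdot A_w\subseteq A_{u+w}$ -- whereas the cohomological constructions manifestly yield only a $\mathbb{Z}$-graded ring together with a vector-space decomposition into $\mathbb{Z}^n$-pieces. Upgrading that decomposition to a grading of the algebra appears to need a genuinely new input, perhaps a Koszul-type or Ehrenborg--Readdy coalgebra argument realizing the subset-indexing of the flag $f$-vector as an honest tensor factorization of $A$. A secondary difficulty is that for a general Gorenstein$^*$ poset $P$ one does not have hard Lefschetz for $\sd(P)$ short of the full $g$-conjecture, so the cleanest case to settle first is $P$ polytopal (or shellable), where the toric variety of $\sd(P)$ supplies hard Lefschetz and Karu's shelling recursion $P_n=\sum_i f_ic+g_id$ exhibits the graded pieces explicitly. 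I expect the standardness of $A$ to be the genuine crux -- it is precisely the gap between Karu's statement that the $cd$-coefficients are dimensions of graded pieces and the desired statement that they form the Hilbert function of a single algebra.
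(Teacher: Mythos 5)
This is an open conjecture, not a theorem: the paper offers no proof of the statement and explicitly says ``The conjecture is known to hold for posets of Gorenstein$^*$ simplicial complexes \cite[Theorem 1.3]{Karu-M vector analog} but is open in general.'' There is therefore no proof in the paper for your attempt to be measured against, and your own write-up correctly presents itself as a ``plausible line of attack'' rather than an argument.

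That said, your strategic outline is a sound reading of the landscape. You correctly identify the natural starting point (the order complex $\sd(P)$ as a balanced homology sphere, a colored linear system of parameters giving the flag $h$-vector as a $\mathbb{Z}^n$-graded Hilbert function), the dictionary $c\mapsto 1+t_i$, $d\mapsto t_i+t_{i+1}$ that the paper sketches in its discussion of Karu's proof, and the two genuine obstacles: a generic Lefschetz element destroys the fine $\mathbb{Z}^n$-grading, and even granting the numerics one still has to manufacture a \emph{standard} graded algebra rather than a mere vector-space decomposition. These are precisely the points on which the conjecture remains stuck, and your suggestion to look at Karu's combinatorial intersection cohomology sheaf (and its shelling recursion $P_n=\sum_i f_i c + g_i d$) is consistent with how the known case for Gorenstein$^*$ simplicial complexes was proved in \cite{Karu-M vector analog}. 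To be clear about what you have and have not done: you have not exhibited the algebra $A$, not shown that the proposed quotient is well-defined independent of choices, and not addressed standardness beyond naming it as the crux, so this should be read as an informed research plan, not a proof of the conjecture.
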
	
The conjecture is known to hold for posets of Gorenstein$^*$ simplicial complexes \cite[Theorem 1.3]{Karu-M vector analog} but is open in general. 
\section{The flag upper bound conjecture}
Denote by $C_d(n)$ the cyclic $d$-polytope with $n$ vertices. The UBC for polytopes was proposed by Motzkin \cite{Motzkin-UBC} and proved by McMullen \cite{McMullen-UBT}, and it says that if $P$ is a $d$-polytope with $n$ vertices, then $f_i(P)\leq f_i(C_d(n))$ for all $1\leq i\leq d-1$. The UBC has several important generalizations: Klee \cite{Klee-Eulerian UBT} proved the UBC for all Eulerian complexes with sufficiently many vertices, Stanley \cite{Stanley-UBT} verified it for simplicial spheres, and Novik \cite{Novik-UBT} further extended it to all odd-dimensional homology manifolds and Eulerian even-dimensional homology manifolds. We remark that while the upper bounds on the face numbers for all classes of complexes mentioned above are the same, there are many polytopes or spheres that simultaneously maximize all face numbers; other classes of neighborly polytopes and neighborly spheres exist apart from the cyclic polytope. In fact, the current best lower bound for the number of combinatorial types of polytopes is attained by counting distinct neighborly polytopes \cite{Padrol-neighborly polytopes}.

The flag UBC turns out to be very different from the general UBT. Denote by $J_m(n)$ the simplicial $(2m-1)$-sphere on $n$ vertices obtained as the join of $m$ copies of the circle, each one a cycle with either $\lfloor \frac{n}{m} \rfloor$ or $\lceil \frac{n}{m}\rceil$ vertices. The following conjecture is proposed in \cite[Conjecture 6.3]{Nevo-Petersen} and \cite[Conjecture 6.3]{Nevo-Lutz}.
\begin{conjecture}
	Let $\Delta$ be a flag $(2m-1)$-sphere with $n$ vertices. Then $f_i(\Delta)\leq f_i(J_m(n))$ for all $1\leq i\leq 2m-1$. Furthermore, if $f_i(\Delta)=f_i(J_m(n))$ for some $i$, then $\Delta=J_m(n)$. 
\end{conjecture}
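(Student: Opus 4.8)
The plan is to reduce the whole family of face-number inequalities to a single statement about $h$- (equivalently $\gamma$-) vectors, and then to prove that statement by induction on $m$ with a suitably strengthened hypothesis. Since $\Delta$ is a flag homology $(2m-1)$-sphere, its $h$-vector is symmetric by the Dehn--Sommerville relations, and for both $\Delta$ and $J_m(n)$ each $f_i$ is a \emph{nonnegative} integer combination of $h_0,\dots,h_{2m}$; moreover the join multiplies $h$-polynomials and the $h$-polynomial of a cycle $C_k$ is $1+(k-2)t+t^2$, so
\[
h_{J_m(n)}(t)=\prod_{j=1}^{m}\bigl(1+(n_j-2)t+t^2\bigr),\qquad n_j\in\{\lfloor n/m\rfloor,\lceil n/m\rceil\}.
\]
Thus it suffices to show that among all flag $(2m-1)$-spheres on $n$ vertices --- equivalently, with $h_1=n-2m$ fixed --- the complex $J_m(n)$ simultaneously maximizes $h_1,\dots,h_m$, together with the corresponding equality statement. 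Rewriting in the basis of Definition~\ref{df: gamme vector} gives the slicker target: if $\gamma_1(\Delta)=n-4m$ then $\gamma_i(\Delta)\le e_i(n_1-4,\dots,n_m-4)=\gamma_i(J_m(n))$ for every $i$, where $e_i$ is the $i$th elementary symmetric polynomial; by Maclaurin's inequality the right-hand side is the largest value of $e_i$ among all ways of writing $n-4m$ as a sum of $m$ nonnegative integers. Note that this is a flag \emph{upper} bound for $\gamma$, so Gal's nonnegativity (Conjecture~\ref{conj: Gal's conjecture}) points the wrong way and cannot be invoked directly.

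For the induction I would not work inside the class of all flag spheres --- for even-dimensional ones the analogous upper bound is subtler and its extremal complex is not unique, as noted in the introduction --- but carry a strengthened hypothesis about vertex links in, and edge-deletions of, odd-dimensional flag spheres (equivalently, about flag balls and about joins of a flag odd-sphere with cycles). Concretely, a sphere is not a cone, so $\dim(\Delta\backslash v)=\dim\Delta$ and the identity $h_i(\Delta)=h_i(\Delta\backslash v)+h_{i-1}(\lk v)$ of \cite{Athanasiadis-flag prop} applies; for a vertex $v$ one has that $\lk(v)$ is a flag $(2m-2)$-sphere on $\deg(v)$ vertices and $\Delta\backslash v$ is a flag $(2m-1)$-ball, and since $\Delta$ has a non-edge $\{u,v\}$ (a flag sphere is not a simplex) one can feed in the intersection identity $\lk(\{u,v\})=\lk(u)\cap\lk(v)$ of Lemma~\ref{lm: face link prop} together with an inclusion--exclusion/Mayer--Vietoris bookkeeping in the Stanley--Reisner ring --- in the spirit of the commutative diagram used to prove Proposition~\ref{lm: lower bound g2} --- to peel off one ``cycle factor'' of the product at a time. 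The equality case would be handled separately, by tracking when each recursive inequality is tight and when the base case of Proposition~\ref{lm: lower bound f-numbers} is attained, forcing $\Delta$ to decompose as a join and hence, by induction, to equal $J_m(n)$.

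I expect the main obstacle to be that none of these recursions is visibly monotone. As stressed in the introduction, the $f$-vector of a flag complex cannot be deformed continuously: deleting a vertex or contracting an edge can move the $h$-numbers by a large amount in either direction, and --- in contrast with the lower-bound side, where rigidity theory bounds the relevant multiplication cokernels from below (Lemma~\ref{lm: k-rigidity of complex minus a pt}, Proposition~\ref{lm: lower bound g2}) --- there is no known flag analog of the $g$-theorem bounding them from above. Showing that the ``loss'' at each step is at least the loss suffered by the corresponding step in $J_m(n)$ is essentially the whole content of the conjecture, and a genuinely new input seems to be required; the most promising candidates are the spectral/discrepancy estimates behind the asymptotic result of \cite{Adamaszek-Hladky}, upgraded by a stability-to-exactness argument, or an extremal set-theoretic attack that uses the full force of Dehn--Sommerville and Poincar\'e duality to rule out the ``too fat'' clique vectors that Tur\'an-type bounds alone would permit. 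A secondary, structural difficulty is that the induction unavoidably visits even-dimensional flag complexes and flag balls, so one must first isolate the correct strengthened statement --- provable, yet strong enough to close the induction --- about links in odd-dimensional flag spheres. The cases already settled (the edge number in \cite{Zheng-flag edge number} and the asymptotic bound in \cite{Adamaszek-Hladky}) suggest the plan is workable for $f_1$ and in the limit, but a uniform exact argument is still out of reach.
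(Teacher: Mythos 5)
The statement you are asked to prove is labeled as a \emph{conjecture} in the paper (the flag upper bound conjecture of Nevo--Lutz and Nevo--Petersen), and it has no proof there or anywhere else: it is open for general $m$, and you say so explicitly. So there is no ``paper's own proof'' to compare against, and the honest conclusion of your proposal --- that a genuinely new input is required --- is the correct assessment. Your reduction is sound and matches the paper's framework: the $h$-polynomial is multiplicative under joins, so $h_{J_m(n)}(t)=\prod_j(1+(n_j-2)t+t^2)$ and, since $\gamma$ is also multiplicative under joins, $\gamma_i(J_m(n))=e_i(n_1-4,\dots,n_m-4)$; since each $f_{i-1}$ is a nonnegative combination of the $h_j$'s and $h$ is symmetric, it suffices to dominate $h_1,\dots,h_m$, equivalently $\gamma_1,\dots,\gamma_m$. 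You are also right that Gal's conjecture runs the wrong direction and that the even-dimensional detour (where the extremal complex is provably non-unique) is a real structural obstacle for a vertex-link induction.

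Two small corrections to your survey of the known cases. First, Zheng settled the full conjecture (all $f_i$, with uniqueness) for flag $3$- and $5$-manifolds in \cite{Zheng-flag 3-dimensional}, not just the edge number; \cite{Zheng-flag edge number} then extends the $f_1$-bound to all odd-dimensional flag normal pseudomanifolds. Second, the technique in those papers is closer to the elementary exact argument you are hoping for than to the rigidity-style peeling you sketch: using $\lk(u)\cap\lk(v)=\lk(\{u,v\})$ (Lemma~\ref{lm: face link prop}) and inclusion--exclusion, one gets for any facet $\sigma$ the bound $\sum_{v\in\sigma}f_0(\lk(v))\le 2(m-1)n+4m$, and then double counting over facets yields the face-number bounds. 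That combinatorial route, rather than the Stanley--Reisner diagram-chasing of Proposition~\ref{lm: lower bound g2}, is the one that has produced exact (non-asymptotic) results so far, and is the most natural place to look for further progress; the Adamaszek--Hladk\'y approach via the Removal Lemma that you cite is correct as stated but, as the paper emphasizes, only applies for $n$ sufficiently large, so your ``stability-to-exactness upgrade'' is precisely the missing step.
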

The above conjecture has been verified in several important cases. Adamaszek and Hladk{\'y} \cite{Adamaszek-Hladky} proved an asymptotic version of the conjecture in the class of flag triangulations of $(2m-1)$-manifolds. Using the extremal graph theory, they showed that not only the $f$-numbers, but also $h$-numbers, $g$-numbers and $\gamma$-numbers are maximized by $J_m(n)$ as long as the number of vertices is large enough. We sketch the proof below. Denote by $T_m(n)$ the clique complex generated by the complete $m$-partite Tur\'an graph with $n$ vertices and let $\Delta$ be a flag triangulated $(2m-1)$-manifold with $n$ vertices. Fix a small number $a>0$ and an integer $1\leq i<m$. If $f_i(\Delta)\leq (1-a)f_i(T_m(n))$, then since $f_i(J_m(n))=f_i(T_m(n))+O(n^i)=O(n^{i+1})$ for any $1\leq i< m$, it follows that the $n^{i+1}$ term is also the dominant term in $f_i(\Delta)$. So with $n$ large enough one can show that $f_j(\Delta)\leq f_j(J_m(n))$ for all $j$. Otherwise, since $f_{m}(\Delta)=o(n^{m+1})$, the Removal Lemma from extremal graph theory implies that a $K_{m+1}$-free subgraph $G'$ can be obtained from $G(\Delta)$ by removing only $o(n^2)$ many edges. Thus as long as there are sufficiently many vertices in $\Delta$, the number of $i$-faces in the clique complex of $G'$ is at least $(1-2a)f_i(T_m(n))$. Then by applying other extremal graph theory results, one shows that $G(\Delta)$ can be obtained from the $m$-partite Tur\'an graph by adding or deleting at most $O(n^2)$ edges. Finally since $\Delta$ triangulates a manifold, it follows that $f_i(\Delta)$ must be maximized by $f_i(J_m(n))$.

Zheng (\cite{Zheng-flag 3-dimensional}, \cite{Zheng-flag edge number}) gave a proof of the conjecture in the class of all flag 3- and 5-manifolds. Given an edge $\{u,v\}$ in a flag pseudomanifold $\Delta$, the inclusion-exclusion principle together with Lemma \ref{lm: face link prop} yields that \[f_0(\lk(u))+f_0(\lk(v))=f_0(\lk(u)\cup\lk(v))-f_0(\lk(\{u,v\}))\leq n-f_0(\lk(\{u,v\})).\]
More generally, using the inclusion-exclusion principle, one can show that for any facet $\sigma$ of $\Delta$, 
\[\sum_{v\in\sigma} f_0(\lk(v))\leq 2(m-1)n+4m.\] The upper bound then follows from the double counting. In \cite{Adamaszek-Hladky} Adamaszek and Hladk{\'y} conjectured that the same upper bounds on the face numbers continue to hold for flag odd dimensional pseudomanifolds. However, the conjectured maximizer of $f_i$ is not unique in this class: for example, both $J_2(16)$ and the join of $C_8$ and $C_4\cup C_4$ have the same $f$-vector. 

What about the flag even-dimensional upper bound conjecture? In \cite{Gal-real root conjecture} Gal showed that the roots of the $h$-polynomial of a flag homology 4-sphere $\Delta$ with $n$ vertices are all real for $d\leq 5$. As a corollary, the face numbers of $\Delta$ are bounded by those of the suspension of $J_2(n-2)$. This motivates the following conjecture. Let $J_m^*(n):=\mathcal{S}^0*C_1*\cdots*C_m$, where each $C_i$ is a circle of length either $\left\lceil{\frac{n-2}{m}}\right \rceil$ or $\left\lfloor{\frac{n-2}{m}}\right \rfloor$, and $f_0(J_m^*(n))=n\geq 4m+2$. Denote by $\mathcal{S}_{n}$ the set of flag 2-spheres on $n$ vertices, and define \[\mathcal{J}_m^*(n):=\{S*C_2*\cdots*C_m\,|\,S \in \mathcal{S}_{|V(C_1)|+2}\}.\]
\begin{conjecture}
 Let $\Delta$ be a flag homology $2m$-sphere on $n$ vertices. Then $f_i(\Delta)\leq f_i(J_m^*(n))$ for all $i=1,\dots, 2m$. If equality holds for some $1\leq i\leq 2m$, then $\Delta \in \mathcal{J}_m^*(n)$.
\end{conjecture}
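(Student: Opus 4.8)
The plan is to follow the two‑part template behind the known low‑dimensional cases: a \emph{local} inclusion--exclusion bound on the degrees of the vertices lying in a common facet, followed by a \emph{global} step that passes to vertex links and feeds in the odd‑dimensional flag UBC. Write $d_v:=f_0(\lk(v))$ for a flag homology $2m$-sphere $\Delta$ on $n$ vertices. The first thing I would establish is that every facet $\sigma$ of $\Delta$ satisfies
\[
\sum_{v\in\sigma} d_v \;\le\; (2m-1)\,n+4m+2,
\]
with equality for every facet of $J_m^*(n)$. Since $\sigma$ is a clique, $\sum_{v\in\sigma}(n-1-d_v)=\sum_{w\notin\sigma}c_w$ with $c_w:=|\{v\in\sigma:\{v,w\}\notin\Delta\}|$; if $c_w=0$ then $\sigma\cup\{w\}$ is a clique of size $2m+2$, impossible as $\dim\Delta=2m$, so $c_w\ge1$; and if $c_w=1$ with $v_0$ the unique non‑neighbour of $w$ in $\sigma$, then $(\sigma\setminus v_0)\cup\{w\}$ is the second facet at the ridge $\sigma\setminus v_0$ --- using that a homology $2m$-sphere is a pseudomanifold --- so $w\mapsto v_0$ is injective and at most $2m+1$ vertices have $c_w=1$. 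Hence $\sum_{w\notin\sigma}c_w\ge 2(n-2m-1)-(2m+1)$, which rearranges to the asserted inequality. This step is elementary and dimension‑independent, and I expect no obstruction to it; its equality case (the $2m+1$ ``opposite apices'' of $\sigma$ all distinct, every remaining $w$ with $c_w=2$) is the germ of the eventual characterization.

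For the global step one uses $(i+1)f_i(\Delta)=\sum_v f_{i-1}(\lk(v))$, each $\lk(v)$ being a flag homology $(2m-1)$-sphere on $d_v$ vertices by Lemma~\ref{lm: face link prop}. I would \emph{not} simply bound $f_{i-1}(\lk(v))$ by $f_{i-1}(J_m(d_v))$: this is too wasteful, since in the conjectured maximizer the links of the $n-2$ non‑apex vertices are the strongly unbalanced joins $C_4*C_2*\cdots*C_m$, whose face numbers lie strictly below those of the balanced $J_m(d_v)$, so naively summing those bounds overshoots $f_i(J_m^*(n))$. Rather, the global step must couple the per‑facet inequality above with the bounds $4m\le d_v\le n-2$ --- the lower one from Proposition~\ref{lm: lower bound f-numbers} applied to $\lk(v)$, the upper one because a flag homology sphere has no universal vertex --- and the handshake identity $\sum_v d_v=2f_1(\Delta)$, in such a way that a vertex acquires large degree only by forcing many of its fellow facet‑members to be octahedral‑like. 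Two routes look plausible: an exact version of the Adamaszek--Hladk\'y extremal‑graph argument (which already gives the asymptotic statement for $f$, $h$, $g$ and $\gamma$), where the work is to remove the ``$n$ large'' hypothesis and to cope with the non‑uniqueness of the maximizer; or an induction on $m$ carrying a quantitative hypothesis that measures the defect $f_{i-1}(J_m(d_v))-f_{i-1}(\lk(v))$, so that these defects can be charged against the per‑facet budget.

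For the equality characterization, I would argue that $f_i(\Delta)=f_i(J_m^*(n))$ forces equality in the per‑facet bound for every facet $\sigma$, hence that each $\sigma$ has $2m+1$ distinct opposite apices and $c_w\in\{1,2\}$ throughout; tracking how these apices match up across adjacent facets should recover a partition of $V(\Delta)$ into blocks on which $\Delta$ restricts to a flag $2$-sphere and to circles, i.e.\ a join decomposition $S*C_2*\cdots*C_m$ with $S$ a flag $2$-sphere --- precisely membership in $\mathcal J_m^*(n)$. The rigidity‑theoretic technique used in the proof of Proposition~\ref{lm: lower bound g2}, which converts equality in an $f$- or $h$-inequality into a structural splitting, is the natural tool here.

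The main obstacle is the global step, for two mutually reinforcing reasons. First, the per‑facet inequality constrains sums over the faces of $\Delta$, not the degree sequence alone, so the optimization is not a clean majorization problem; to bring convexity to bear one must first prove a genuinely global statement such as ``at most two vertices have degree exceeding $n(1-1/m)+O(1)$'', which does not follow from the local bound by itself when $n$ is large. Excluding degree distributions that obey all the linear constraints yet are realized by no flag homology $2m$-sphere seems to demand new structural input --- the analogue of the manifold hypothesis used by Zheng in dimensions $3$ and $5$, or of the real‑rootedness of $h_\Delta$ used by Gal for flag homology $4$-spheres. Second, the program is conditional on the odd‑dimensional flag UBC, itself open for $2m-1\ge7$. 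I would therefore expect this approach to yield an unconditional proof for $m=2$ (recovering Gal's bound and adding the characterization $\Delta\in\mathcal J_2^*(n)$) and for $m=3$, and to stall at $m\ge4$ exactly at the global control of the degree sequence and at the dependence on the odd‑dimensional conjecture.
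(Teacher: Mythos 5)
This statement is a \emph{conjecture}, and the paper offers no proof; it explicitly records that the only case settled is the inequality part for $m=2$, due to Gal's real-rootedness result. There is therefore no ``paper's own proof'' to measure your attempt against, and what you have written is --- as you yourself say --- a program, not a proof. I will assess it as such.

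The per-facet inequality you derive is correct and is a genuine contribution. For a facet $\sigma$ of a flag homology $2m$-sphere on $n$ vertices, the identity $\sum_{v\in\sigma}(n-1-d_v)=\sum_{w\notin\sigma}c_w$ is right (non-neighbours of a vertex of $\sigma$ lie outside $\sigma$ because $\sigma$ is a clique); $c_w\ge1$ follows from flagness because $c_w=0$ would create a $(2m+2)$-clique; and the pseudomanifold condition forces the map from $\{w: c_w=1\}$ into $\sigma$ to be injective, giving at most $2m+1$ vertices with $c_w=1$. Hence $\sum_{w\notin\sigma}c_w\ge (2m+1)+2(n-4m-2)=2n-6m-3$, which rearranges to $\sum_{v\in\sigma}d_v\le(2m-1)n+4m+2$. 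A direct count shows $J_m^*(n)$ meets this with equality on every facet (the apex has degree $n-2$, a vertex on $C_i$ has degree $n+2-|C_i|$, and the degrees over a facet sum to $(2m-1)n+4m+2$). This is the exact even-dimensional analogue of Zheng's $\sum_{v\in\sigma}f_0(\lk(v))\le2(m-1)n+4m$ in the odd case, so you are on the right track as to what the local ingredient should be.

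The genuine gap is everything after that, and you have honestly labelled it. The double-counting identity $(i+1)f_i(\Delta)=\sum_v f_{i-1}(\lk(v))$ does not combine with the per-facet bound in any mechanical way: the links of the conjectured maximizer are the highly unbalanced $C_4*C_2*\cdots*C_m$, so bounding each $f_{i-1}(\lk(v))$ by the balanced $f_{i-1}(J_m(d_v))$ overshoots, and you have no concrete mechanism to ``charge'' the resulting slack against the per-facet budget. The phrase ``I would therefore expect this approach to yield an unconditional proof for $m=2$ and $m=3$'' is not warranted by anything preceding it: for $m=2$ the inequality is already known by a completely different method (real-rootedness of $h_\Delta$), the characterization is open, and you have given no argument that your coupling closes; for $m=3$ nothing at all is established. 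The dependence on the odd-dimensional flag UBC is correctly noted, but even where that input is available ($m=2,3$ via Zheng), the global degree-sequence control you describe as ``at most two vertices have degree exceeding $n(1-1/m)+O(1)$'' is asserted, not derived. In short: a correct and tight local lemma, a plausible but unexecuted global strategy, and an honest acknowledgement that the problem remains open --- which matches the status of the conjecture in the paper.
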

The conjecture is open except for the inequality part in the $m=2$ case \cite{Gal-real root conjecture}.

\section*{Acknowledgements}
The author would like to thank Isabella Novik for helpful comments and discussions.


\begin{thebibliography}{}
\bibitem{Adamaszek-Hladky} M. Adamaszek and J. Hladk\'y: Upper bound theorem for odd-dimensional flag triangulations of manifolds, \emph{Mathematika} 62(2016), 909-928.
\bibitem{Adiprasito-Benedetti} K. Adiprasito and B. Benedetti: The Hirsch conjecture holds for normal flag complexes, \emph{Math. Oper. Research} 39(2014), 1340-1348.
\bibitem{Athanasiadis-Buchsbaum} C. A. Athanasiadis and V. Welker: Buchsbaum$^*$ complexes, \emph{Math. Z.} 272 (2012), 131-149.
\bibitem{Athanasiadis-flag prop} C. A. Athanasiadis: Some combinatorial properties of flag simplicial pseudomanifolds and spheres, \emph{Ark. Mat.}, 0(2008), 1-12.
\bibitem{Barnette-LBT} D. Barnette: A proof of the lower bound conjecture for convex polytopes, \emph{Pacific J. Math.} 46(1973), 349-354.
\bibitem{Barnette-pseudomanifold connectedness} D. Barnette, Graph theorems for manifolds, \emph{Israel J. Math.} 16(1973), 62-72.
\bibitem{Billera-Lee-80}  L. Billera and C. W. Lee: Sufficiency of McMullen's conditions for $f$-vectors of simplicial polytopes, \emph{Bull. Amer. Math. Soc.}, 2(1980), 181-185.
\bibitem{Billera-Lee-81} L. Billera and C. W. Lee: A proof of the sufficiency of McMullen's conditions for $f$-vectors of simplicial convex polytopes, \textit{J. Combin. Theory Ser. A} 31(1981), 237-255.
\bibitem{CharneyDavis-CD conjecture} R. Charney and M. Davis: The Euler characteristic of a non-positively curved, piecewise Euclidean manifold, \emph{Pacific J. Math.} 171(1995), 117-137.
\bibitem{Constantinescu-Varbaro} A. Constantinescu and M. Varbaro: On the $h$-vectors of Cohen-Macaulay flag complexes, \emph{Math. Scand.} 112(2013), 86-111.
\bibitem{CCV} G. Caviglia, A. Constantinescu and M. Varbaro: On a conjecture by Kalai, \emph{Israel J. Math.} 204(2014), 469-475.
\bibitem{Cook-Nagal-flag CM} D. Cook And U. Nagel: Cohen-Macaulay graphs and face vectors of flag complexes, \emph{SIAM J. Discrete Math.}, 26(2012), 89-101.
\bibitem{DavisOkun-dimension 3 CD conjecture} M. Davis and B. Okun: Vanishing theorems and conjectures for the $\ell^2$-homology of right-angled Coxeter groups, \emph{Geom. Topol.} 5(2001), 7-74.
\bibitem{Eckhoff-conj} J. Eckhoff: Intersection properties of boxes. I. An upper-bound theorem, \emph{Israel J. Math.} 62(1988), 283-301.
\bibitem{Eckhoff} J. Eckhoff: A new Tur\'an-type theorem for cliques in graphs, \emph{Discrete Mathematics}, 282(2004), 113-122.

\bibitem{Ehrenborg-Readdy} R. Ehrenborg and M. Readdy: Coproducts and the $cd$-index, \emph{J. Algebr. Comb.} 8(1998), 273-299.
\bibitem{Fogelsanger} A. Fogelsanger: The generic rigidity of minimal cycles. ProQuest LLC, Ann Arbor, MI, 1988. Thesis (Ph.D.)-Cornell University.
\bibitem{Frankl-Furedi-kalai} P. Frankl, Z. F\"uredi and G. Kalai: Shadows of colored complexes, \emph{Math. Scand.} 63(1988), 169-178. 
\bibitem{Frohmader-Kalai's conjecture} A. Frohmader: Face vectors of flag complexes, \emph{Israel J. Math.} 164(2008), 153-164.  
\bibitem{Gal-real root conjecture} \'S. R. Gal: Real root conjecture fails for five and higher dimensional spheres, \emph{Discrete Comput. Geom.} 34(2005), 269–284.
\bibitem{Gal-J-CD conjecture} \'S. R. Gal and T. Januszkiewicz: Even- vs. odd-dimensional Charney–Davis conjecture, \emph{Discrete Comput. Geom.} 44(2010), 802-804. 
\bibitem{Gromov} M. Gromov: Hyperbolic groups in ``Essays in Group Theory", M.S.R.I Publ. 8, Springer-Verlag, New York and Berlin, 1987, 75-264.
\bibitem{Kalai-rigidity} G. Kalai: Rigidity and the lower bound theorem I, \emph{Invent. Math.} 88(1987), 125-151.
\bibitem{Katona} G. Katona: A theorem of finite sets, \emph{Theory of graphs (Proc. Colloq.,
Tihany, 1966)}, 187-207. Academic Press, New York, 1968.
\bibitem{Karu-cd-index} K. Karu: The $cd$-index of fans and posets, \emph{Compos. Math.} 142(2006), 701-718.
\bibitem{Karu-M vector analog} K. Karu: $M$-vector analogue for the $cd$-index, \emph{J. Combin. Theory. Ser. A} 137(2016), 257-272.
\bibitem{Klee-Eulerian UBT}  V. Klee: The number of vertices of a convex polytope, \emph{Canadian J. Math.} 16(1964), 702-720. 
\bibitem{Kruskal} J. Kruskal: The number of simplices in a complex, \emph{Mathematical optimization techniques}, 251-278. Univ. of California Press, Berkeley, Calif., 1963.
\bibitem{Lee-stress space} C. W. Lee: P.L.-spheres, convex polytopes, and stress, \emph{Discrete Comput. Geom.} 15(1996), 389-421.

\bibitem{Nevo-Lutz} F. Lutz and E. Nevo: Stellar theory for flag complexes, \emph{Math. Scand.} 118(2016), 70-82.
\bibitem{McMullen-UBT} P. McMullen: The maximum numbers of faces of a convex polytope, \emph{Mathematika} 17(1970), 179-184. 
\bibitem{Motzkin-UBC} T. S. Motzkin: Comonotone curves and polyhedra, \emph{Bull. Amer. Math. Soc.} 63(1957), 35. 
\bibitem{Murai-Nevo} S. Murai and E. Nevo: On the $cd$-index and $\gamma$-vector of $S^*$-shellable CW-spheres, \emph{Math. Z.} 271(2012), 1309-1319.
\bibitem{Nevo-Petersen} E. Nevo and T. K. Petersen: On $\gamma$-vectors satisfying the Kruskal-Katona inequalities, \emph{Discrete Comput. Geom.} 45(2010), 503-521.
\bibitem{Nevo-Petersen-Tancer} E. Nevo, T. K. Petersen and B. Tenner: The $\gamma$-vector of a barycentric subdivision, \emph{J. Combin. Theory Ser. A} 118(2011), 1364-1380.
\bibitem{Novik-UBT} I. Novik: Upper bound theorems for homology manifolds, \emph{Israel J. Math.} 108(1998), 45-82.
\bibitem{Novik-Swartz} I. Novik and E. Swartz: Applications of Klee's Dehn-Sommerville relations, \emph{Discrete Comput. Geom.}, 42(2009), 261-276. 
\bibitem{Padrol-neighborly polytopes} A. Padrol: Many Neighborly Polytopes and Oriented Matroids, \emph{Discrete Comput. Geom.} 50(2013), 865-902.
\bibitem{Stanley-UBT} R. Stanley: The upper bound conjecture and Cohen-Macaulay rings, \emph{Studies in Applied Mathematics} 54(1975), 135-142.
\bibitem{Stanley-g-theorem} R. Stanley: The number of faces of a simplicial convex polytope, \emph{Adv. in Math.}, 35(1980), 236-238.
\bibitem{Stanley-monotonicity} R. Stanley: A monotonicity property of $h$-vectors and $h^*$-vectors, \emph{European J. Combinatorics} 14(1993), 251-258.
\bibitem{Stanley-cd-index} R. Stanley: Flag $f$-vectors and the $cd$-index, \emph{Math. Z.} 216(1994), 483-499.
\bibitem{Stanley-green-book} R. Stanley: \emph{Combinatorics and Commutative Algebra}, Birkh{\"a}user, Boston, 2. edition, 1996.
\bibitem{Zheng-flag 3-dimensional} H. Zheng: The flag upper bound theorem for 3- and 5-manifolds, \emph{Israel J. Math.} 222(2017), 401-419.
\bibitem{Zheng-flag edge number} H. Zheng: The upper bounds on the edge numbers of flag odd dimensional normal pseudomanifolds, arXiv:1805.09179.
\bibitem{Ziegler-book}  G. Ziegler: Lectures on Polytopes, Graduate Texts in Mathematics 152, SpringerVerlag,
New York, 1995.
\end{thebibliography}
\end{document}